\numberwithin{equation}{section}
\def\R{\mathbb{R}}
\def\E{\mathbb{E}}
\def\B{\mathcal{B}}
\def\L{\mathcal{L}}
\def\H{\mathcal{H}}
\def\<{\big\langle}
\def\>{\big\rangle}
\def\diiv{\operatorname{div}}
\definecolor{red}{rgb}{0.9, 0, 0}
\newtheorem{Theorem}{Theorem}[section]
\newtheorem{Proposition}[Theorem]{Proposition}
\newtheorem{Lemma}[Theorem]{Lemma}
\newtheorem{Remark}[Theorem]{Remark}
\newtheorem{Example}{Example}[section]
\begin{document}
\title{Bayesian Numerical Homogenization}

\date{\today}

\author{Houman Owhadi\footnote{California Institute of Technology, Computing \& Mathematical Sciences , MC 9-94 Pasadena, CA 91125, owhadi@caltech.edu}}

\maketitle

\begin{abstract}
Numerical homogenization, i.e. the finite-dimensional approximation of solution spaces of PDEs with arbitrary rough coefficients, requires the identification of accurate basis elements. These basis elements are oftentimes found after a laborious process of scientific investigation and plain guesswork. Can this identification problem be facilitated? Is there a general recipe/decision framework for guiding the  design of basis elements?
We suggest that the answer to the above questions could be positive based on the reformulation of numerical homogenization as a Bayesian Inference problem in which a given PDE with rough coefficients (or multi-scale operator) is excited with noise (random right hand side/source term) and one tries to estimate the value of the solution at a given point based on a finite number of observations. We apply this reformulation to the identification of bases for the numerical homogenization of arbitrary integro-differential equations and show that these bases have optimal recovery properties.
In particular we show how Rough Polyharmonic Splines can be re-discovered as the optimal solution of a Gaussian filtering problem.
\end{abstract}


\section{Bayesian Numerical Analysis}

This paper is inspired by a curious (and, perhaps, overlooked) link between Bayesian Inference and Numerical Analysis \cite{Diaconis:1988}, known as Bayesian Numerical Analysis \cite{Diaconis:1988, Shaw:1988, Hagan:1991, Hagan:1992}, and that can be traced back to Poincar\'{e}'s course on Probability Theory \cite{Poincare:1896}. We will recall Diaconis' compelling example \cite{Diaconis:1988} as an illustration of this link.

Let $f:[0,1]\rightarrow \R$ be a given function and assume that we are interested in the numerical approximation of $\int_0^1 f(t)\,dt$. The Bayesian approach to this quadrature problem  is to (1) Put a prior (probability distribution) on continuous functions $\mathcal{C}[0,1]$ (2) Calculate $f$ at $x_1,x_2,\ldots,x_n$ (to obtain the data $(f(x_1),\ldots,f(x_n))$) (3) Compute a posterior (4) Estimate $\int_0^1 f(t)\,dt$ by the Bayes rule.

If the prior on $\mathcal{C}[0,1]$ is that of a Brownian Motion (i.e. $f(t)=B_t$ where $B_t$ is a Brownian motion and $B_0$ is normal), then $\E\big[f(x)\big|f(x_1),\ldots,f(x_n)\big]$ is the piecewise linear interpolation of $f$ between the points $x_1,\ldots,x_n$ and one re-discovers the trapezoidal quadrature rule.

If the prior on $\mathcal{C}[0,1]$ is that of the first integral of a Brownian Motion (i.e. $f(t)\sim \int_0^t B_s\,ds$) then the posterior $\E\big[f(x)\big|f(x_1),\ldots,f(x_n)\big]$ is the cubic spline interpolant and integrating $k$ times yields splines of order $2k+1$.
Although this link has lead to the identification of new quadrature rules for numerical integration \cite{Hagan:1991}, it appears to have remained little known and our paper is prompted by the question of the existence of a similar link between Bayesian Inference and Numerical Homogenization.

As a prototypical example, consider the numerical homogenization of the PDE
\begin{equation}\label{eqn:scalar}
\begin{cases}
    -\diiv \Big(a(x)  \nabla u(x)\Big)=g(x) \quad  x \in \Omega,\, g \in L^2(\Omega),\\
    u=0 \quad \text{on}\quad \partial \Omega,
    \end{cases}
\end{equation}
where $\Omega$ is a bounded subset of $\R^d$ with piecewise Lipschitz boundary, $a$ is a symmetric, uniformly elliptic $d\times d$ matrix on $\Omega$ and with entries in $L^\infty(\Omega)$.

Recall that numerical homogenization concerns the approximation of the solution space of \eqref{eqn:scalar} with a finite-dimensional space.
Although classical homogenization concepts \cite{BeLiPa78, Mur78, Spagnolo:1968,Gio75, PapanicolaouVaradhan:1981, Kozlov:1979} might be present in some instances of this problem \cite{HoWu:1997, HouWu:1999, AllBri05, EEngquist:2003, Abdulle:2004, AnGlo06, Blanc:2006, Blanc:2007, EnSou08, BalJing10, Engquist:2011, Abdulle:2011}, one of the main objectives of numerical homogenization is to achieve a numerical approximation of the solution space of \eqref{eqn:scalar} with arbitrary rough coefficients, i.e., in particular, without the assumptions found in classical homogenization, such as scale separation, ergodicity at fine scales and $\epsilon$-sequences of operators.
In this situation,  piecewise linear finite-elements  can perform arbitrarily badly  \cite{BaOs:2000} and the numerical approximation of the solution
space involves the identification of accurate basis elements adapted to the microstructure $a(x)$
 \cite{WhHo87, BaOs:1983, BaCaOs:1994, OwZh:2007a, OwZh06c, OwZh:2007b, EfHo:2007, EfGiHouEw:2006, Arbogast:2007, Arbogast:2006, NoPaPi:2008, Caffarelli:2008, BraWu09, BraWu09, ChuHou09, BeOw:2010, BaLip10, OwZh:2011, EfGaWu:2011, MaPe:2012, GrGrSa2012}.

As for the identification of quadrature rules in numerical analysis, the identification of accurate basis elements in numerical homogenization has been based on a difficult process of scientific investigation.  Let us now turn our attention to the Bayesian approach to this problem. An immediate question is where do we place the prior? (1) If the prior is placed on $u$ then posterior values do not see (depend on) the microstructure. (2) If the prior is placed on $a$ then the microstructure becomes random whereas our purpose is the numerical homogenization of a given deterministic microstructure. Let us also note that the randomization of the microstructure, as investigated by Polynomial Chaos Approximation/Stochastic Expansion methods
 \cite{GhanemDham:1998, Ghanem:1999, Xiu:2009, BabuskaNobRa:2010, EldredWebsterConstantine:2008,  DoostanOwhadi:2010}, does not lead to the simplification seen after homogenization but to increased complexity with the  dimension of input stochastic  variables \cite{Todor07a, Bieri09a} (although Stochastic Expansion methods have been used successfully to beat  Monte-Carlo sampling they do not lead to averaging results seen in homogenization). (3) If the prior is placed on $g$ then the noise propagates through the microstructure and the posterior value of $u$ contains that information.

This observation motivates us to place the prior on the source term $g$ in \eqref{eqn:scalar}, e.g., replace it by white noise (i.e. a centered Gaussian field $\xi(x)$ on $\Omega$ with covariance function $\delta(x-y)$) and consider the stochastic PDE
\begin{equation}\label{eqn:scalarstochastic}
\begin{cases}
    -\diiv \Big(a(x)  \nabla u(x)\Big)=\xi(x) \quad  x \in \Omega,\\
    u=0 \quad \text{on}\quad \partial \Omega.
    \end{cases}
\end{equation}
 Observe that the solution \eqref{eqn:scalarstochastic} at the point $x$, $u(x)$, is a random variable and its best (mean squared) approximation given $u(x_1),\ldots,u(x_N)$ (the  values of the solution of \eqref{eqn:scalarstochastic} at the points $x_1,\ldots,x_N$ form the data) is its conditional expectation $\E\big[u(x)\big|u(x_1),\ldots,u(x_N)\big]$. One result of this paper is that
 \begin{equation}\label{eq:meangaussprps}
 \E\big[u(x)\big|u(x_1),\ldots,u(x_N)\big]=\sum_{i=1}^N u(x_i)\phi_i(x),
 \end{equation}
 where the functions $\phi_i$ are Rough Polyharmonic Splines \cite{OwhadiZhangBerlyand:2014} (RPS) which have been identified as accurate basis elements for the numerical homogenization of \eqref{eqn:scalar}  having noteworthy variational, optimal recovery and localization properties. The discovery of these Rough Polyharmonic Splines has required a significant amount of work and trial and errors but here, they are identified after a single step of Bayesian conditioning.

This observation motivates us to investigate what the same process of Bayesian conditioning would give under  different priors and under other observations  than the values of $u$ at individual points (we will consider data formed by the values of a finite number of linear functions of $u$). In particular, we will use this link between Bayesian Inference and Numerical Homogenization to identify bases for the numerical homogenization of arbitrary linear integro-differential equations. Our purpose is to show that this link is generic and could in principle be used, beyond numerical homogenization, as a guiding principle for the coarse-graining of multi-scale systems. The Bayesian approach to this problem  is to (1) Put a prior on the degrees of freedom of the system (2) Select a finite number of coarse variables (3) Compute the posterior value of the state of the system conditioned on the coarse variables.

\section{General setup}

Let $\L$ and $\B$ be linear integro-differential operators on $\Omega$ and $\partial \Omega$ such that
(1) $(\L,\B): \H(\Omega)\rightarrow \H_{\L}(\Omega)\times \H_{\B}(\partial \Omega)$, where $\H(\Omega)$, $\H_{\L}(\Omega)$ and $\H_{\B}(\partial \Omega)$ are Hilbert spaces of Generalized functions on $\Omega$ and $\partial \Omega$ (2)  $\H_{\L}(\Omega)$ contains $L^2(\Omega)$ and $\H(\Omega)$ is contained in $L^2(\Omega)$.

Consider the integro-differential equation
\begin{equation}\label{eqn:scalargeneral}
\begin{cases}
    \L u(x)=g(x) \quad  x \in \Omega,\\
    \B u=0 \quad \text{on}\quad \partial \Omega.
    \end{cases}
\end{equation}
As with \eqref{eqn:scalar} the numerical homogenization of \eqref{eqn:scalargeneral} will require the assumption that $g$ belongs to a strict subspace of $\H_{\L}(\Omega)$.

We will assume that $\L$ and $\B$ are such that \eqref{eqn:scalargeneral} (1) admits a unique solution in  $\H(\Omega)$ (2) and a Green's function
$G$.  Recall that $G$ is defined as the solution of
\begin{equation}\label{eqn:scalargreen}
\begin{cases}
    \L G(x,y)=\delta(x-y) \quad  x \in \Omega,\\
    \B G(x,y)=0 \quad \text{for}\quad x\in \partial \Omega,
    \end{cases}
\end{equation}
where $\delta(\cdot-y)$ is the Delta mass of dirac at the point $y$.

\begin{Example}\label{eg:8gy2ys}
Note that for the prototypical example \eqref{eqn:scalar} we have
\begin{equation}
\L u(x):=-\diiv\big(a(x)\nabla u(x)\big)\text{ and }\B u(x)=u(x).
\end{equation}
\end{Example}

Our purpose is to identify a good basis for the numerical homogenization or coarse-graining of \eqref{eqn:scalargeneral}.

\section{Bayesian Numerical Homogenization}
Our Bayesian approach to the numerical homogenization of \eqref{eqn:scalargeneral} is to replace the source term $g$ by a Gaussian field $\xi$.
More precisely we introduce $\xi$, a centered Gaussian field on $\Omega$ with covariance function
\begin{equation}\label{eqlambda}
\Lambda(x,y):=\E\big[\xi(x)\xi(y)\big],
\end{equation}
and consider the stochastic  integro-differential equation

\begin{equation}\label{eqn:noisy}
\begin{cases}
    \L u(x)=\xi(x) \quad  x \in \Omega,\,\\
    \B u=0 \quad \text{on}\quad \partial \Omega.
    \end{cases}
\end{equation}

\begin{Proposition}
The solution of \eqref{eqn:noisy} is a Gaussian field on $\Omega$ whose covariance function $\Gamma(x,y):=\E\big[u(x)u(y)\big]$ is
\begin{equation}\label{eq:dergamma}
\Gamma(x,y)=\int_{\Omega^2}G(x,z) \Lambda(z,z') G(y,z')\,dz\,dz'.
\end{equation}
\end{Proposition}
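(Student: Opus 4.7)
The plan is to represent the solution explicitly via the Green's function and then exploit the linearity of this representation together with Fubini to obtain both the Gaussianity and the stated covariance formula in one stroke.

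First I would invoke the defining property of $G$ in \eqref{eqn:scalargreen} to write the solution of \eqref{eqn:noisy} as
\begin{equation}
u(x)=\int_{\Omega} G(x,z)\,\xi(z)\,dz,
\end{equation}
interpreted as a stochastic (Wiener-type) integral against the centered Gaussian field $\xi$ with covariance $\Lambda$. The requirement that $\H_{\L}(\Omega)\supset L^2(\Omega)$, combined with existence and uniqueness of solutions to \eqref{eqn:scalargeneral} for $g\in L^2$, ensures that $G(x,\cdot)$ lies in a space against which $\xi$ can be integrated (for white noise one needs $G(x,\cdot)\in L^2$; more generally one needs $G(x,\cdot)\in \H_{\L}(\Omega)^*$, which is dual to the space in which $\xi$ lives almost surely).

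Once this representation is in place, the Gaussian character of $u$ is immediate: for any finite collection $x_1,\dots,x_n\in\Omega$, the vector $(u(x_1),\dots,u(x_n))$ is a linear image of the Gaussian field $\xi$ under the bounded linear map $\xi\mapsto (\langle G(x_i,\cdot),\xi\rangle)_{i=1}^n$, and linear images of Gaussians are Gaussian. Since $\xi$ is centered, so is $u$, and hence $u$ is fully characterized by its covariance $\Gamma$.

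For the covariance I would compute directly
\begin{equation}
\Gamma(x,y)=\E\bigl[u(x)u(y)\bigr]=\E\Bigl[\int_{\Omega} G(x,z)\xi(z)\,dz\int_{\Omega} G(y,z')\xi(z')\,dz'\Bigr],
\end{equation}
and then pull the expectation inside the double integral by Fubini (justified by the integrability that made the stochastic integral well-defined in the first place), using $\E[\xi(z)\xi(z')]=\Lambda(z,z')$ to obtain exactly \eqref{eq:dergamma}. The only genuine technical point is the interpretation and validity of the stochastic integral when $\xi$ is distribution-valued; the cleanest route is to regularize $\xi$, apply the argument for smooth forcings where $u(x)=\int G(x,z)\xi(z)\,dz$ holds pointwise, and pass to the limit using the assumed Hilbert space duality between $\H(\Omega)$ and $\H_{\L}(\Omega)$. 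Everything else is bookkeeping with bilinearity of expectation.
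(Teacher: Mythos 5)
Your proposal is correct and follows essentially the same route as the paper: represent the solution as $u(x)=\int_\Omega G(x,z)\xi(z)\,dz$, conclude Gaussianity from the linearity of $(\L,\B)$ (so that $u$ is a linear image of the Gaussian field $\xi$), and obtain \eqref{eq:dergamma} by exchanging expectation with the double integral and using $\E[\xi(z)\xi(z')]=\Lambda(z,z')$. The additional care you take in interpreting the stochastic integral for distribution-valued $\xi$ is a refinement of, not a departure from, the paper's argument.
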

\begin{Remark}\label{rmllstar}
Write $(\L^*,\B^*)$ the adjoint of $(\L,\B)$ with respect to the (scalar) product defined on $\H(\Omega)$ by
 $\<u,v\>_{L^2}:=\int_\Omega  u(x) v(x)\,dx$. Observe that $G(y,x)$ (the transpose of $G(x,y)$ with respect to the scalar product $\<\cdot,\cdot\>_{L^2}$)
is the Green's function of $(\L^*,\B^*)$ (the complex conjugation of the
Green's function is not required to define its adjoint because the scalar product is bilinear and not sesquilinear).
Observe that if $\xi$ is white noise (i.e. $\Lambda(x-y)=\delta(x-y)$) then
\begin{equation}\label{eq:dergammawhitenoise}
\Gamma(x,y)=\int_{\Omega}G(x,z)  G(y,z)\,dz,
\end{equation}
which is the Kernel of $\L^* \L$, i.e., $\L^* \L \Gamma(x,y)=\delta(x-y)$.
\end{Remark}
\begin{proof}
Since $\L$ and $\B$ are linear operators, $u$ is a linear function of $\xi$ and is therefore a Gaussian field. Moreover its covariance function is given by
\begin{equation}
\begin{split}
\Gamma(x,y)&=\E\big[u(x)u(y)\big]=\E\big[\int_{\Omega^2} G(x,z) \xi(z) G(y,z') \xi(z')\big]\,dz\,dz'\\
&=\int_{\Omega^2} G(x,z)  G(y,z') \E\big[\xi(z)\xi(z')\big]\,dz\,dz',
\end{split}
\end{equation}
which finishes the proof.
\end{proof}

\begin{Remark}
Beyond Bayesian Homogenization, equations with random right hand side can also  be of interest in practical applications, for instance in the modeling of the electrostatics in nanoscale field-effect sensors, where  fluctuations arise from random charge concentrations \cite{Heitzinger:2014}.
\end{Remark}

\subsection{On the choice of the noise}

We will show that the choice of the noise $\Lambda$ can be determined by the regularity of the source term $g$ in the right hand side of \eqref{eqn:scalargeneral}.
More precisely if $\xi$ is white noise ($\Lambda(x,y)=\delta(x-y)$) then the resulting accuracy estimates will be obtained under the assumption that $g\in L^2(\Omega)$ and as a function of $\|g\|_{L^2(\Omega)}$.

If $\xi$ is not white noise (i.e. if its covariance function is not $\delta(x-y)$) then we assume that there exists two linear integro-differential operators
$\L_{\Lambda}$ and $\B_{\Lambda}$ such that $\xi$ is the stochastic solution of the following equation with white noise $\xi'$ as the source term:
\begin{equation}\label{eqn:noisyxi}
\begin{cases}
    \L_{\Lambda} \xi(x)=\xi'(x) \quad  x \in \Omega,\,\\
    \B_{\Lambda} \xi=0 \quad \text{on}\quad \partial \Omega.
    \end{cases}
\end{equation}

In what follows, if $\xi$ is not white noise then we assume it to be obtained as in \eqref{eqn:noisyxi} and the resulting accuracy estimates will be obtained under the assumption that $\L_\Lambda g\in L^2(\Omega)$ and as a function of $\|\L_\Lambda g\|_{L^2(\Omega)}$.
A prototypical example corresponds to the situation where $\xi$ is obtained as the regularization of white noise via a power of the Laplace Dirichlet operator on $\Omega$ and this allows us to identify optimal recovery bases under the assumption that $g\in H^s(\Omega)$ with $s\geq 0$ or $s<0$.

\subsection{Identification of basis elements via conditioning}
Let $N$ be a strictly positive integer. Our Bayesian approach is based on the conditioning of the solution of \eqref{eqn:noisy} posterior to the observation of $N$ linear functions of $u(x)$, expressed as
\begin{equation}
\int_{\Omega}u(x)\psi_i(x)\,dx\quad i\in \{1,\ldots,N\},
\end{equation}
where $\psi_1,\ldots,\psi_N$ are $N$ linearly independent generalized functions (distributions) on $\Omega$ such that for all $i$
\begin{equation}\label{eq:gentestfunccond}
\int_{\Omega^2} \psi_i(x) \Gamma(x,y) \psi_i(y) \,dx\,dy<\infty.
\end{equation}
Examples of $\psi_i$ include masses of Dirac ($\psi_i(x)=\delta(x-x_i)$), indicator functions of subsets of $\Omega$ and elements of $L^1(\Omega)$.
Let $\Theta$ be the $N\times N$ symmetric matrix defined by
\begin{equation}\label{eq:gentestfuncthet}
\Theta_{i,j}:=\int_{\Omega^2} \psi_i(x) \Gamma(x,y) \psi_j(y) \,dx\,dy.
\end{equation}
Note that \eqref{eq:gentestfunccond} implies that if $u$ is the solution of \eqref{eqn:noisy} then
 \begin{equation}\label{eq:Psidef}
 \Psi:=\big(\int_{\Omega}u(x) \psi_1(x)\,dx,\ldots, \int_{\Omega}u(x) \psi_N(x)\,dx\big),
 \end{equation}
  is a well defined center Gaussian random vector with covariance matrix $\Theta$.

We will from now on assume that the covariance function \eqref{eqlambda} is not degenerate in the sense that for $f\in \H(\Omega)$,
\begin{equation}
\|f\|_\Lambda^2:=\int_{\Omega} f(x) \Lambda(x,y) f(y)\,dx\,dy
 \end{equation}
 is zero if and only if $f$ is the null function. Note that if $\xi$ is obtained via \eqref{eqn:noisyxi} then $\|f\|_{\Lambda}^2=\|\L_\Lambda^{-1} f\|_{L^2(\Omega)}^2$ (writing  $\L_\Lambda^{-1} f$ the solution of $\L_{\Lambda} u=f$ in $\Omega$ with $\B_{\Lambda} u=0$ on $\partial \Omega$)  and the non-degeneracy of $\Lambda$ is equivalent to that of  the operator $\L_\Lambda$.

\begin{Lemma}\label{lem:Theta}
The $N\times N$ matrix $\Theta$ is symmetric positive  definite. Furthermore for all $l\in \R^{N}$,
\begin{equation}
l^T \Theta l=\|v\|_{\Lambda}^2,
\end{equation}
where $v$ is the solution of
\begin{equation}\label{eqn:scalarGreensTheta}
\begin{cases}
    \L^* v(x)=\sum_{j=1}^{N} l_j \psi_j(x)& \text{ for }  x \in \Omega,\\
   \B^* v(x)=0 & \text{ for }x\in \partial \Omega.
    \end{cases}
\end{equation}
\end{Lemma}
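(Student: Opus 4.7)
The plan is to reduce $l^{T}\Theta l$ to the energy norm of an explicit function by substituting the representation \eqref{eq:dergamma} for $\Gamma$ and recognizing a dual Green's-function representation.

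First I would note symmetry: since $\Lambda$ is a covariance function it satisfies $\Lambda(x,y)=\Lambda(y,x)$, so \eqref{eq:dergamma} gives $\Gamma(x,y)=\Gamma(y,x)$, and thus $\Theta_{i,j}=\Theta_{j,i}$ from \eqref{eq:gentestfuncthet}. Next I would take an arbitrary $l\in\R^{N}$, set $\phi(x):=\sum_{j=1}^{N}l_{j}\psi_{j}(x)$, and compute by bilinearity
\begin{equation*}
l^{T}\Theta l=\int_{\Omega^{2}}\phi(x)\,\Gamma(x,y)\,\phi(y)\,dx\,dy.
\end{equation*}
Substituting \eqref{eq:dergamma} and invoking Fubini (justified by condition \eqref{eq:gentestfunccond}, which is exactly the hypothesis that $l^{T}\Theta l<\infty$), I would rewrite this as
\begin{equation*}
l^{T}\Theta l=\int_{\Omega^{2}}\Lambda(z,z')\Bigl(\int_{\Omega}\phi(x)G(x,z)\,dx\Bigr)\Bigl(\int_{\Omega}\phi(y)G(y,z')\,dy\Bigr)\,dz\,dz'.
\end{equation*}

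The key step is to identify the inner integral as $v(z)$, where $v$ solves \eqref{eqn:scalarGreensTheta}. By Remark \ref{rmllstar}, $G(x,z)$ viewed as a function of $z$ is the Green's function of $(\L^{*},\B^{*})$ evaluated at the source point $x$; equivalently $\L^{*}_{z}G(x,z)=\delta(x-z)$ and $\B^{*}_{z}G(x,z)=0$. Applying $\L^{*}$ to $z\mapsto \int_{\Omega}\phi(x)G(x,z)\,dx$ produces $\phi(z)=\sum_{j}l_{j}\psi_{j}(z)$ with the correct boundary conditions, so $v(z)=\int_{\Omega}\phi(x)G(x,z)\,dx$ by uniqueness. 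Plugging this in yields
\begin{equation*}
l^{T}\Theta l=\int_{\Omega^{2}}v(z)\,\Lambda(z,z')\,v(z')\,dz\,dz'=\|v\|_{\Lambda}^{2},
\end{equation*}
which is the claimed identity.

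Positive definiteness then follows in two short steps: the identity gives $l^{T}\Theta l\geq 0$, and if $l^{T}\Theta l=0$ the non-degeneracy assumption on $\Lambda$ forces $v\equiv 0$, hence $\L^{*}v=\sum_{j}l_{j}\psi_{j}=0$, and linear independence of the $\psi_{j}$ yields $l=0$. The main delicate point is the Fubini interchange and the identification $v(z)=\int\phi(x)G(x,z)\,dx$ when the $\psi_{i}$ are only generalized functions (e.g., Dirac masses or elements of $L^{1}$); this is precisely where hypothesis \eqref{eq:gentestfunccond} is needed, guaranteeing that the pairings make sense and that $v$ lies in the domain where the adjoint Green's representation is valid.
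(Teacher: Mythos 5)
Your proof is correct and follows essentially the same route as the paper: both compute $l^{T}\Theta l$ by substituting \eqref{eq:dergamma}, identify $\int_{\Omega}\sum_{j}l_{j}\psi_{j}(x)G(x,z)\,dx$ as the solution $v$ of the adjoint problem \eqref{eqn:scalarGreensTheta} via Remark \ref{rmllstar}, and conclude positive definiteness from the non-degeneracy of $\Lambda$ together with linear independence of the $\psi_{j}$. Your direct argument (non-degeneracy forces $v\equiv 0$, hence $\sum_{j}l_{j}\psi_{j}=0$, hence $l=0$) is just a contrapositive restatement of the paper's per absurdum step, so there is nothing substantively different to report.
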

\begin{proof}
We obtain from  \eqref{eq:dergamma} that for $l\in \R^N$
\begin{equation}
l^T \Theta l=\int_{\Omega^2} (\int_{\Omega}\sum_{i=1}^N \psi_i(x) G(x,z)\,dx) \Lambda(z,z') (\int_{\Omega}\sum_{j=1}^N \psi_j(y) G(y,z')\,dy)\,dz\,dz'.
\end{equation}
Write
\begin{equation}
v(x):=\sum_{i=1}^N  l_i \int_{\Omega}G(y,x)\psi_i(y)\,dy.
\end{equation}
Since $G(\cdot,x)$ is the Green's function of the adjoint operator (Remark \ref{rmllstar}) it follows that $v$ is the solution of \eqref{eqn:scalarGreensTheta} and $\|v\|_{\Lambda}^2=l^T \Theta l$ which implies that $\Theta$ is symmetric positive definite. Indeed if $\Theta$ is not positive definite, then there would exist a non zero vector $l\in \R^N$ such that $\Theta l=0$. This would imply $\|v\|_{\Lambda}=0$ which is a contradiction since the equation \eqref{eqn:scalarGreensTheta} has a non zero solution (since $l\not=0$ and the $\psi_i$ are linearly independent).
\end{proof}

Our motivation for using Gaussian noise in \eqref{eqn:noisy} lies in the fact that for Gaussian fields, conditional expected values can be computed via linear projection. Henceforth our approach is also akin to Gaussian filtering for numerical homogenization and the following Theorem shows that this approach allows
for the identification of a (projection) basis $\phi_i$.
\begin{Theorem}\label{thm:bayescondnumhom}
Let $u$ be the solution of \eqref{eqn:noisy} and $\Psi$ defined by \eqref{eq:Psidef}, then
\begin{equation}\label{eq:condexpphi}
\E\big[u(x)\big|\Psi\big]=\sum_{i=1}^N  \Psi_i \phi_{i}(x),
\end{equation}
with
\begin{equation}\label{eq:defpsii}
\Psi_i:=\int_{\Omega}u(y) \psi_i(y)\,dy,
\end{equation}
and
\begin{equation}\label{eq:phiidef}
\phi_i(x):=\sum_{j=1}^N \Theta^{-1}_{i,j} \int_{\Omega} \Gamma(x,y)\psi_j(y)\,dy.
\end{equation}
Furthermore, $u(x)$ conditioned on  the value of  $\Psi$ is a Gaussian random variable with mean \eqref{eq:condexpphi} and variance
\begin{equation}\label{eq:ggjhgs723}
\sigma(x)^2=\Gamma(x,x)-\sum_{i,j=1}^N \Theta^{-1}_{i,j}\int_{\Omega} \Gamma(x,y)\psi_j(y)\,dy \int_{\Omega} \Gamma(x,y)\psi_i(y)\,dy.
\end{equation}
\end{Theorem}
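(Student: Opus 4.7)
The plan is to reduce this to the standard conditioning formula for jointly Gaussian vectors. First I would observe that by the preceding Proposition the field $u$ is centered Gaussian, and hence the augmented family $(u(x),\Psi_1,\dots,\Psi_N)$ is a centered Gaussian vector in $\R^{N+1}$, since each $\Psi_i=\int_\Omega u(y)\psi_i(y)\,dy$ is a continuous linear functional of $u$. The hypothesis \eqref{eq:gentestfunccond} ensures each $\Psi_i$ has finite variance, so the joint Gaussian structure is well defined.

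Next I would compute the three blocks of the covariance matrix of $(u(x),\Psi)$. The block $\E[\Psi\Psi^T]$ is exactly $\Theta$ by definition \eqref{eq:gentestfuncthet}. For the cross-covariances I would exchange expectation and spatial integration (using the finiteness bound from \eqref{eq:gentestfunccond} via Cauchy--Schwarz as justification for Fubini) to get
\begin{equation}
\E\big[u(x)\Psi_j\big]=\int_\Omega \E\big[u(x)u(y)\big]\psi_j(y)\,dy=\int_\Omega \Gamma(x,y)\psi_j(y)\,dy,
\end{equation}
and the variance of $u(x)$ is $\Gamma(x,x)$.

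I would then invoke the standard Gaussian conditioning formulas: for a centered jointly Gaussian vector $(X,Y)$ with $Y$ having nonsingular covariance $\Sigma_{YY}$, the conditional law of $X$ given $Y$ is Gaussian with mean $\Sigma_{XY}\Sigma_{YY}^{-1}Y$ and variance $\Sigma_{XX}-\Sigma_{XY}\Sigma_{YY}^{-1}\Sigma_{YX}$. Lemma \ref{lem:Theta} guarantees that $\Theta$ is invertible, so $\Sigma_{YY}^{-1}=\Theta^{-1}$. Substituting the three blocks yields the mean
\begin{equation}
\E\big[u(x)\,\big|\,\Psi\big]=\sum_{i,j=1}^N \Psi_i\,\Theta^{-1}_{i,j}\int_\Omega \Gamma(x,y)\psi_j(y)\,dy=\sum_{i=1}^N \Psi_i\,\phi_i(x),
\end{equation}
with $\phi_i$ as in \eqref{eq:phiidef}, and the conditional variance is precisely \eqref{eq:ggjhgs723} after using the symmetry of $\Theta^{-1}$ to symmetrize the quadratic form.

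The only nontrivial points are (i) justifying the interchange of $\E$ and the integrals defining $\Psi_i$, for which the finite-variance hypothesis \eqref{eq:gentestfunccond} together with Cauchy--Schwarz suffices, and (ii) citing Lemma \ref{lem:Theta} for the invertibility of $\Theta$ so that the Gaussian conditioning identities actually apply. Everything else is a direct specialization of the finite-dimensional conditional Gaussian formula to this setting.
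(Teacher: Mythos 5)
Your proposal is correct and follows essentially the same route as the paper: the paper computes $\E\big[u(x)\big|\Psi\big]$ as the linear function $c\cdot\Psi$ minimizing the mean-squared error \eqref{eq:gyg2ugw}, which is exactly the derivation of the block Gaussian conditioning formula you cite, with the same cross-covariance computation and the same appeal to Lemma \ref{lem:Theta} for the invertibility of $\Theta$. The only cosmetic difference is that you quote the finite-dimensional conditional-Gaussian formula directly while the paper re-derives it by explicit minimization in $c$.
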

\begin{proof}
Let
\begin{equation}
u_\Psi(x):=\E\big[u(x)\big|\Psi\big].
\end{equation}
Since $u$ and $\Psi$ belong to the same Gaussian space, it follows that $u_\Psi$ is a linear function of $\Psi$ obtained by minimizing the mean squared error
\begin{equation}\label{eq:gyg2ugw}
\E\Big[\big(u(x)-c\cdot \Psi\big)^2\Big]=\Gamma(x,x)-2\sum_{i=1}^N c_i \int_{\Omega} \Gamma(x,y)\psi_i(y)\,dy+\sum_{i,j=1}^N c_i c_j \Theta_{i,j},
\end{equation}
with respect to $c\in \R^N$, where $\Theta$ is defined by \eqref{eq:gentestfuncthet}. We conclude the proof by identifying the minimizer in $c$, using Lemma \ref{lem:Theta} for the invertibility of $\Theta$ and noting that \eqref{eq:ggjhgs723} is simply \eqref{eq:gyg2ugw} at the minimum in $c$.
\end{proof}

\begin{Example}
If $\L$ and $\B$ correspond to the prototypical example \eqref{eqn:scalar} (see also Example \ref{eg:8gy2ys}), if $\xi$ is white noise (i.e. if its covariance matrix is $\Lambda(x,y)=\delta(x-y)$), and if the observable functions are masses of Diracs at points $x_i\in \Omega$ (and $d\leq 3$ which is required for \eqref{eq:gentestfunccond}), then
Theorem \ref{thm:bayescondnumhom} implies \eqref{eq:meangaussprps} and the basis elements $\phi_i$ are the RPS elements of \cite{OwhadiZhangBerlyand:2014}
which are a generalization of Polyharmonic Splines to PDEs with rough coefficients. Recall that
Polyharmonic splines can be traced back to the seminal work of Harder and Desmarais \cite{Harder:1972} and Duchon  \cite{Duchon:1976,Duchon:1977,Duchon:1978}.

Note also that according to Theorem \eqref{thm:bayescondnumhom} the process of Bayesian conditioning gives us the whole posterior distribution of $u(x)$ and not only its (conditional) expected value. In particular, the distribution of $u(x)$ conditioned on   $u(x_1),\ldots,u(x_N)$ is a Gaussian random variable with mean \eqref{eq:meangaussprps} and variance
\begin{equation}\label{eq:meangaussprdeps}
\sigma^2(x)= \Gamma(x,x)-\sum_{i,j=1}^N \Theta^{-1}_{i,j} \Gamma(x,x_j) \Gamma(x,x_i),
\end{equation}
and this observation can be used to compute the probability of deviation of the RPS interpolation from $u(x)$ by a given margin and guide the addition of interpolation points (note that $\sigma^2(x)=0$ at the interpolation points $x_1,\ldots,x_N$).
\end{Example}
\begin{Remark}
We will show in Theorem \ref{thm:gammaproof} that $\sigma(x)$ also controls the pointwise  error between  the  solution of the original integro-differential equation \eqref{eqn:scalargeneral} and the approximation $\sum_{i=1}^N \phi_i(x) \int_{\Omega}u(y)\psi_i(y)\,dy$.
\end{Remark}

\section{Variational properties of basis elements}\label{subsec:whitenoise}

In this section we will show that as for RPS \cite{OwhadiZhangBerlyand:2014}, the basis elements $\phi_i$ from Bayesian Inference have remarkable variational and optimal recovery properties that  can be used (1) for their practical computation (2) for the derivation of accuracy estimates.

\subsection{White Gaussian noise}
In this subsection we will assume that $\xi$ is  white noise (i.e. $\Lambda(x,y)=\delta(x-y)$). Define
\begin{equation}
\begin{split}
V:=\big\{\phi \in \H(\Omega) \big| \L \phi\in L^2(\Omega)\text{ and } \B\phi=0\text{ on }  \partial \Omega\big\},
\end{split}
\end{equation}
and let $\<\cdot,\cdot\>$ be the (scalar) product on $V$ defined by: for $u, v\in V$,
\begin{equation}
 \<u, v\> := \int_\Omega \big(\L u(x)\big)\big(\L v(x)\big)\,dx.
\end{equation}
Note in particular that $\<v,v\>=0$ if and only if $v=0$ and we write
\begin{equation}\label{eq:normV}
 \|v\|_V:=\<v, v\>^\frac{1}{2},
\end{equation}
the corresponding norm (note that $\|v\|_V$ is a norm on $V$ because $\|v\|_V=0$ and $v\in V$ imply $\L v=0$ in $\Omega$ and $\B v=0$ on $\partial \Omega$ which leads to $v=0$ by the non-degeneracy of the operator $\L$).
\begin{Theorem}\label{thm:rkhs}
If $\Gamma(x,x)<\infty$ then for $v\in V$ and $x\in \Omega$
\begin{equation}
\big|v(x)\big| \leq \big(\Gamma(x,x)\big)^\frac{1}{2} \|v\|_V,
\end{equation}
and the space $V$ with
the reproducing Kernel $\Gamma(x,y)$ forms a  Reproducing Kernel Hilbert Space. In particular, for all $v\in V$
\begin{equation}
\<v,\Gamma(\cdot,x)\>=v(x).
\end{equation}
\end{Theorem}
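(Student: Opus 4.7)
The plan is to identify $\L\Gamma(\cdot,x)$ explicitly and then run the standard RKHS template: verify $\Gamma(\cdot,x)\in V$, derive the reproducing identity, and extract the pointwise bound from Cauchy--Schwarz.

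First I would check that $\Gamma(\cdot,x)\in V$. Combining the white-noise form \eqref{eq:dergammawhitenoise} with the Green's function property $\L_y G(y,z)=\delta(y-z)$, differentiating under the integral gives
\begin{equation*}
\L_y\Gamma(y,x)=\int_\Omega \L_y G(y,z)\,G(x,z)\,dz=\int_\Omega \delta(y-z)\,G(x,z)\,dz=G(x,y).
\end{equation*}
The hypothesis $\Gamma(x,x)=\int_\Omega G(x,z)^2\,dz<\infty$ is exactly $G(x,\cdot)\in L^2(\Omega)$, so $\L\Gamma(\cdot,x)\in L^2(\Omega)$. The boundary condition $\B_y \Gamma(y,x)=0$ on $\partial\Omega$ is inherited from $\B_y G(y,z)=0$ by commuting $\B$ with the $z$-integral, and thus $\Gamma(\cdot,x)\in V$.

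Next, for the reproducing identity, I would note that any $v\in V$ satisfies $\L v\in L^2(\Omega)$ and $\B v=0$, so by the assumed unique solvability of \eqref{eqn:scalargeneral} the Green's function representation yields $v(x)=\int_\Omega G(x,y)\,\L v(y)\,dy$. Substituting the identification of $\L\Gamma(\cdot,x)$ from the first step,
\begin{equation*}
\<v,\Gamma(\cdot,x)\>=\int_\Omega \L v(y)\,\L\Gamma(y,x)\,dy=\int_\Omega \L v(y)\,G(x,y)\,dy=v(x).
\end{equation*}
The pointwise bound then follows at once from Cauchy--Schwarz:
\begin{equation*}
|v(x)|^2=|\<v,\Gamma(\cdot,x)\>|^2\leq \|v\|_V^2\,\<\Gamma(\cdot,x),\Gamma(\cdot,x)\>=\|v\|_V^2\,\Gamma(x,x),
\end{equation*}
the last equality being either the reproducing property at $v=\Gamma(\cdot,x)$ or the direct computation $\int_\Omega G(x,y)^2\,dy=\Gamma(x,x)$. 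Since $\L$ is an isometry from $(V,\|\cdot\|_V)$ onto its image in $L^2(\Omega)$ and is invertible by the well-posedness of \eqref{eqn:scalargeneral}, $V$ is Hilbert, and the display above makes point evaluation continuous on $V$; hence $V$ is an RKHS with kernel $\Gamma$.

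The main obstacle is rigorously justifying the interchange of $\L$ with the integral in the identification $\L\Gamma(\cdot,x)=G(x,\cdot)$, together with the precise sense in which $\Gamma(\cdot,x)$ inherits $\B\Gamma(\cdot,x)=0$. In the concrete elliptic setting of Example \ref{eg:8gy2ys} both steps are classical, but in the abstract integro-differential framework of the paper they must be argued directly from the defining properties of the Green's function and from the regularity encoded in the spaces $\H(\Omega)$, $\H_\L(\Omega)$ and $\H_\B(\partial\Omega)$.
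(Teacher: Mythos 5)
Your proof is correct and follows essentially the same route as the paper: the paper's one-line proof rests on the identity $\<v,\int_\Omega\Gamma(\cdot,y)f(y)\,dy\>=\int_\Omega v(y)f(y)\,dy$ together with Cauchy--Schwarz and $\<\Gamma(\cdot,x),\Gamma(\cdot,x)\>=\Gamma(x,x)$, and your computation $\L_y\Gamma(y,x)=G(x,y)$ combined with the Green's representation $v(x)=\int_\Omega G(x,y)\L v(y)\,dy$ is precisely the justification of that identity. Your added remarks (membership $\Gamma(\cdot,x)\in V$ under $\Gamma(x,x)<\infty$, and completeness of $V$ via the isometry $\L:V\to L^2(\Omega)$) only make explicit details the paper leaves implicit.
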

\begin{proof}
Theorem \ref{thm:rkhs} is a direct consequence of the fact that
\begin{equation}
\<v,\int_{\Omega}\Gamma(\cdot,y)f(y)dy\>=\int_{\Omega} v(y)f(y)\,dy,
\end{equation}
and (by Cauchy-Schwartz inequality and $\<\Gamma(\cdot,x),\int_{\Omega}\Gamma(\cdot,x)\>=\Gamma(x,x)$)
\begin{equation}
\<v,\Gamma(\cdot,x)\>\leq \<v,v\>^\frac{1}{2} \big(\Gamma(x,x)\big)^\frac{1}{2}.
\end{equation}
\end{proof}
Define
\begin{equation}
\begin{split}
V_i:=&\big\{\phi \in V \big| \int_\Omega \phi(x)\psi_i(x)\,dx=1\text{ and }\int_\Omega \phi(x)\psi_j(x)\,dx=0\\&\text{ for }j\in \{1,\ldots,N\} \text{ such that } j\not=i\big\},
\end{split}
\end{equation}
and consider
the following optimization problem over $V_i$:
\begin{equation}\label{eqn:convexopt}
\begin{cases}
\text{Minimize }  \<\phi, \phi\> \\
\text{Subject to }  \phi\in V_i.
\end{cases}
\end{equation}

\begin{Proposition}\label{prop:nonempty}
$V_i$ is a non-empty closed affine subspace of $V$.
Problem \eqref{eqn:convexopt} is a strictly convex quadratic optimization problem over $V_i$. The unique minimizer of \eqref{eqn:convexopt} is  $\phi_i$ as defined by
\eqref{eq:phiidef}.
\end{Proposition}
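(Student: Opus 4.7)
The plan is to treat \eqref{eqn:convexopt} as a minimum-norm problem in the Hilbert space $(V,\langle\cdot,\cdot\rangle)$ subject to $N$ continuous affine constraints, and then identify $\phi_i$ as the orthogonal projection of $0$ onto the affine flat $V_i$. The central auxiliary object is
\begin{equation*}
K_j(x):=\int_\Omega \Gamma(x,y)\psi_j(y)\,dy,
\end{equation*}
so that by definition $\phi_i=\sum_{j=1}^N \Theta^{-1}_{ij}K_j$. First I would verify $K_j\in V$: using \eqref{eq:dergammawhitenoise}, one computes $\L K_j(x)=\int_\Omega G(y,x)\psi_j(y)\,dy$ (the solution of the adjoint problem with right-hand side $\psi_j$), and $\|\L K_j\|_{L^2(\Omega)}^2=\Theta_{jj}<\infty$ by hypothesis \eqref{eq:gentestfunccond}; the boundary condition $\B K_j=0$ is inherited from $G$. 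Hence $\phi_i\in V$, and a matrix identity yields $\int_\Omega \phi_i(x)\,\psi_k(x)\,dx=\sum_j \Theta^{-1}_{ij}\Theta_{jk}=\delta_{ik}$, so $\phi_i\in V_i$ and $V_i$ is non-empty.

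Next, writing $\ell_j(v):=\int_\Omega v(x)\psi_j(x)\,dx$, I would establish that each $\ell_j$ is bounded on $(V,\|\cdot\|_V)$. The reproducing identity established in the proof of Theorem \ref{thm:rkhs}, namely $\langle v,\int_\Omega \Gamma(\cdot,y)f(y)\,dy\rangle=\int_\Omega v(y)\,f(y)\,dy$, gives $\ell_j(v)=\langle v,K_j\rangle$, whence Cauchy--Schwarz yields $|\ell_j(v)|\le \Theta_{jj}^{1/2}\,\|v\|_V$. Since $V_i=\bigcap_{j=1}^N \ell_j^{-1}(\delta_{ij})$, it is a closed affine subspace of $V$. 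The objective $\phi\mapsto\|\phi\|_V^2$ is a strictly convex quadratic form on the Hilbert space $V$, and $V_i$ is a non-empty closed convex set, so \eqref{eqn:convexopt} admits a unique minimizer.

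Finally, I would identify this minimizer with $\phi_i$ via the standard orthogonality characterization: $\phi^\star\in V_i$ minimizes $\|\cdot\|_V^2$ over $V_i$ if and only if $\langle \phi^\star,h\rangle=0$ for every $h$ in the tangent space $V_i^0:=\{h\in V:\ell_j(h)=0,\ j=1,\dots,N\}$. For any $h\in V_i^0$, using $\langle h,K_j\rangle=\ell_j(h)$,
\begin{equation*}
\langle h,\phi_i\rangle=\sum_{j=1}^N \Theta^{-1}_{ij}\langle h,K_j\rangle=\sum_{j=1}^N \Theta^{-1}_{ij}\,\ell_j(h)=0,
\end{equation*}
which together with $\phi_i\in V_i$ proves the claim. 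The only delicate point is the boundedness of the $\ell_j$ when the $\psi_j$ are genuine distributions such as Dirac masses; this is precisely what assumption \eqref{eq:gentestfunccond} buys, via the representation $\ell_j(\cdot)=\langle\cdot,K_j\rangle$ with $\|K_j\|_V^2=\Theta_{jj}<\infty$. The rest is a routine Riesz-projection argument once this continuity is in hand.
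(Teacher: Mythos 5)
Your proof is correct, and it follows the same backbone as the paper's (the auxiliary functions $K_j$ are exactly the paper's $\theta_j$ from \eqref{eq:defthetai}, and the identity $\<v,K_j\>=\int_\Omega v\,\psi_j$ is exactly the computation $\int_\Omega \L\theta_j\,\L v=\int_\Omega \psi_j v$), but your packaging differs in two ways worth noting. First, the paper's proof of Proposition \ref{prop:nonempty} only establishes $\phi_i\in V_i$ and strict convexity (via an explicit expansion in $\lambda$ plus a citation to Ekeland--Temam for existence/uniqueness of the minimizer), and explicitly postpones the identification of the minimizer with $\phi_i$ to the proof of Theorem \ref{lem:minimizingproperty}, where the orthogonality $\<\phi_i,v\>=0$ for $v\in V_0$ and the Pythagoras identity $\<v,v\>=\<\phi_w,\phi_w\>+\<v-\phi_w,v-\phi_w\>$ do the work; you carry out that orthogonality argument in place, which makes your proof self-contained and, as a bonus, renders the abstract existence step (Hilbert projection onto a closed convex set, or the Ekeland--Temam citation) logically unnecessary, since exhibiting $\phi_i\in V_i$ orthogonal to the tangent space $V_0$ already yields existence and uniqueness directly (uniqueness using that $\|\cdot\|_V$ is a genuine norm on $V$). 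Second, you verify closedness of $V_i$ by showing each functional $v\mapsto\int_\Omega v\,\psi_j$ is bounded on $(V,\|\cdot\|_V)$ with norm $\Theta_{jj}^{1/2}$, a point the paper dismisses as ``easy to check''; this is a genuine improvement in explicitness, since closedness is exactly continuity of these functionals, and it is precisely where hypothesis \eqref{eq:gentestfunccond} enters. Both your intermediate appeal to completeness of $V$ and the paper's appeal to Theorem \ref{thm:rkhs} (RKHS structure) sit at the same level of tacit assumption, so neither route is more rigorous on that point; also recall that the invertibility of $\Theta$ you use implicitly is supplied by Lemma \ref{lem:Theta}, which is worth citing explicitly as the paper does.
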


\begin{proof}
Let us first prove that $\phi_i \in V_i$. Let
\begin{equation}\label{eq:defthetai}
\theta_i(x):=\int_{\Omega}\Gamma(x,y)\psi_i(y)\,dy.
\end{equation}
First observe that for all $i \in \{1,\ldots,N\}$,
\begin{equation}\label{eq:ugwygwugye}
\L \theta_i(x)= \int_{\Omega} G(y,x)\psi_i(y)\,dy,
\end{equation}
and $\B\theta_i(x)=0$ on $\partial \Omega$.
 Noting that $\big\|\L \theta_i \big\|_{L^2(\Omega)}^2=\Theta_{i,i}$ we deduce from \eqref{eq:gentestfunccond}
  that $\theta_i \in V$. We conclude from \eqref{eq:phiidef} and Lemma \ref{lem:Theta} that $\phi_i \in V$.
Now observe that \eqref{eq:gentestfuncthet} implies  that
\begin{equation}
\int_{\Omega}\phi_i(x)\psi_j(x)= (\Theta^{-1} \cdot \Theta)_{i,j}= \delta_{i,j},
\end{equation}
where $\delta_{i,i}=1$ and $\delta_{i,j}=0$ for $j\not=i$.
We conclude that $\phi_i \in V_i$ which implies that   $V_i$ is non empty (it is  easy to check that it is a closed affine sub-space of $V$).

Now let us prove that problem \eqref{eqn:convexopt} is a strictly convex optimization problem over $V_i$. Let $v, w \in V_i$ such that $v\not=w$. Write for $\lambda \in [0,1]$,
\begin{equation}\label{eqn:convseept}
f(\lambda):=  \<v+\lambda (w-v),v+\lambda (w-v) \>,
\end{equation}
and we need to show that $f(\lambda)$ is a strictly convex function.
Observing that
\begin{equation}\label{eqn:convseepshiuhiwt}
f(\lambda)= \<v,v\>+ 2\lambda \<v,w-v\> +\lambda^2 \<v-w,v-w\>,
\end{equation}
and noting that $\<v-w,v-w\> >0$ (otherwise one would have $v=w$) we deduce that $f$ is strictly convex in $\lambda$. We conclude that (see, for example, \cite[pp. 35, Proposition 1.2]{EkTe:1987})
that Problem \eqref{eqn:convexopt} is a strictly convex optimization problem over $V_i$ and that it admits a unique minimizer in $V_i$.
We will postpone the proof of the fact that $\phi_i$ is the minimizer of \eqref{eqn:convexopt} to  the proof of Theorem \ref{lem:minimizingproperty}.
\end{proof}

\begin{Remark}
 It is important to note that in practical (numerical) applications each element
$\phi_i$ would be obtained by solving the quadratic optimization problem \eqref{eqn:convexopt} rather than through the representation formula \eqref{eq:phiidef} because the identification of $\Gamma$ in \eqref{eq:phiidef} is more expensive than solving the linear systems associated with \eqref{eqn:convexopt} (inverting a matrix is more expensive than solving a linear system).
 Note also that, if $u$ is the (stochastic) solution of \eqref{eqn:noisy}, then $\phi_i$ is also equal to the expected value of $u(x)$ conditioned on $\int_{\Omega} u(x)\psi_i(x)=1$ and $\int_{\Omega} u(x)\psi_j(x)=0$ for $j\not=i$, i.e.
\begin{equation}
\phi_i(x)=\E\big[u(x)\big|\text{$\int_{\Omega} u(x)\psi_i(x)=1$ and $\int_{\Omega} u(x)\psi_j(x)=0$ for $j\not=i$} \big].
\end{equation}
\end{Remark}

\begin{Remark}
A simple calculation allows us to show that $\phi_i$ is also the solution of the following nested equations
\begin{equation}\label{eqn:nested1}
\begin{cases}
    \L \phi_i(x)=\chi_i(x) \quad  x \in \Omega,\,\\
    \B \phi_i=0 \quad \text{on}\quad \partial \Omega,
    \end{cases}
\end{equation}
\begin{equation}\label{eqn:nested1bis}
\begin{cases}
    \L^* \chi_i(x)=\sum_{j=1}^N \Theta^{-1}_{i,j}\psi_j(x) \quad  x \in \Omega,\,\\
    \B^* \chi_i(x)=0 \quad \text{on}\quad \partial \Omega.
    \end{cases}
\end{equation}
\end{Remark}

\begin{Remark}
Another simple calculation allows us to show that $\phi_i$ is also the solution of the following nested equations
\begin{equation}\label{eqn:nested1g}
\begin{cases}
    \L \phi_i(x)=\chi_i(x) \quad  x \in \Omega,\,\\
    \B \phi_i=0 \quad \text{on}\quad \partial \Omega,\\
    \int_{\Omega} \phi_i(x) \psi_j(x)\,dx=\delta_{i,j} \text{ for }j\in\{1,\ldots,N\},
    \end{cases}
\end{equation}
\begin{equation}\label{eqn:nested1gg}
\begin{cases}
    \L^* \chi_i(x)=\sum_{j=1}^N c_j \psi_j(x) \quad  x \in \Omega,\,\\
    \B^* \chi_i(x)=0 \quad \text{on}\quad \partial \Omega,
    \end{cases}
\end{equation}
where $c\in \R^N$ is an unknown vector determined by the third equation in \eqref{eqn:nested1g}.
\end{Remark}

Write  $V_0$ the subset of $V$ defined by
\begin{equation}\label{eq:hiuhiu33}
 V_0:=\big\{v\in V: \int_{\Omega}v(x)\psi_i(x)\,dx=0, \forall i\in \{1,\ldots,N\}\big\}.
\end{equation}

\begin{Theorem}\label{lem:minimizingproperty}
It holds true that
\begin{itemize}
\item The basis $\phi_i$ is orthorgonal to $V_0$ with respect to the product $\<\cdot,\cdot\>$, i.e.
\begin{equation}\label{eqn:orthogonality}
 \<\phi_i,v\> =0, \quad \forall i\in \{1,\ldots,N\} \text{ and } \forall v\in V_0.
\end{equation}
\item $\sum_{i=1}^N w_i \phi_i$ is the unique minimizer of $\<v,v\>$
over all $v\in V$ such that\\ $\int_{\Omega}v(x)\psi_i(x)\,dx=w_i$.
\item For all $i\in \{1,\ldots,N\}$ and for all $v\in V$,
\begin{equation}\label{eqn:orthogonalidoddoty}
 \<\phi_i,v\> =\sum_{j=1}^{N} \Theta^{-1}_{i,j}\, \int_{\Omega}v(x)\psi_j(x)\,dx.
\end{equation}
\item For all $i,j\in \{1,\ldots,N\}$,
\begin{equation}\label{eqn:orthogjuoddoty}
 \<\phi_i,\phi_j\> =\Theta^{-1}_{i,j}.
\end{equation}
\end{itemize}
\end{Theorem}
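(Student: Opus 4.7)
The plan is to first establish the identity \eqref{eqn:orthogonalidoddoty} (part 3) by a direct computation using the representation \eqref{eq:phiidef} of $\phi_i$ together with the defining property of the Green's function; all other items will then follow essentially for free. The main technical step is the computation, and it is really the only calculation needed.

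\medskip

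First I would prove part 3. Let $\theta_j(x):=\int_\Omega \Gamma(x,y)\psi_j(y)\,dy$ as in \eqref{eq:defthetai}, so that $\phi_i=\sum_j \Theta^{-1}_{i,j}\theta_j$. From \eqref{eq:ugwygwugye} we have $\L\theta_j(x)=\int_\Omega G(y,x)\psi_j(y)\,dy$ with $\B\theta_j=0$. For any $v\in V$, using that $\L v\in L^2(\Omega)$ and $\B v=0$, the definition of the Green's function of the adjoint operator (Remark \ref{rmllstar}) yields $v(y)=\int_\Omega G(y,x)\,\L v(x)\,dx$. Therefore
\begin{equation*}
\<\phi_i,v\>=\int_\Omega(\L\phi_i)(\L v)\,dx=\sum_{j=1}^{N}\Theta^{-1}_{i,j}\int_\Omega\Big(\int_\Omega G(y,x)\psi_j(y)\,dy\Big)\L v(x)\,dx,
\end{equation*}
and exchanging the order of integration gives $\<\phi_i,v\>=\sum_j \Theta^{-1}_{i,j}\int_\Omega v(y)\psi_j(y)\,dy$, which is \eqref{eqn:orthogonalidoddoty}.

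\medskip

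Parts 1 and 4 are now immediate consequences. If $v\in V_0$ then $\int_\Omega v\psi_j=0$ for every $j$, and the sum above is zero, giving \eqref{eqn:orthogonality}. Applying \eqref{eqn:orthogonalidoddoty} with $v=\phi_k$ and recalling from Proposition \ref{prop:nonempty} that $\int_\Omega \phi_k\psi_j\,dx=\delta_{k,j}$ yields $\<\phi_i,\phi_k\>=\Theta^{-1}_{i,k}$, which is \eqref{eqn:orthogjuoddoty}.

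\medskip

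For part 2, fix $w\in\R^N$ and set $u:=\sum_i w_i\phi_i$. For any competitor $v\in V$ with $\int_\Omega v\psi_i=w_i$, the difference $v-u$ lies in $V_0$ since $\int_\Omega(v-u)\psi_i=w_i-w_i=0$. Expanding
\begin{equation*}
\<v,v\>=\<u,u\>+2\<u,v-u\>+\<v-u,v-u\>,
\end{equation*}
the cross term vanishes by the orthogonality just established (applied to each $\phi_i$), so $\<v,v\>=\<u,u\>+\<v-u,v-u\>\ge \<u,u\>$, with equality if and only if $v=u$ (since $\|\cdot\|_V$ is a norm on $V$). This proves both existence and uniqueness, and taking $w=e_i$ it also settles the postponed claim in Proposition \ref{prop:nonempty} that $\phi_i$ is the unique minimizer of \eqref{eqn:convexopt} over $V_i$.
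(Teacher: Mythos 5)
Your proposal is correct and follows essentially the same route as the paper's own proof: the key identity \eqref{eqn:orthogonalidoddoty} is obtained from $\phi_i=\sum_j\Theta^{-1}_{i,j}\theta_j$, the formula \eqref{eq:ugwygwugye} and the Green's function representation of $v$, and the remaining items (orthogonality to $V_0$, $\<\phi_i,\phi_j\>=\Theta^{-1}_{i,j}$, and the Pythagorean minimization argument for $\sum_i w_i\phi_i$, which also settles the postponed claim of Proposition \ref{prop:nonempty}) follow exactly as in the paper. Your write-up is in fact slightly more explicit about the interchange of integrals and the representation $v(y)=\int_\Omega G(y,x)\L v(x)\,dx$, which the paper leaves implicit.
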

\begin{Remark}
Theorem \ref{lem:minimizingproperty} and its proof is analogous to the optimal property of strictly conditionally positive definite kernels  \cite{Wendland:2005} when used as interpolant solutions of the optimal recovery problem \cite{GolombWeinberger:1959}.
\end{Remark}

\begin{proof}
 We have, using \eqref{eq:defthetai}, \eqref{eq:phiidef} and \eqref{eq:ugwygwugye}
\begin{equation}
 \<\phi_i,v\> =\sum_{j=1}^N \Theta^{-1}_{i,j} \int_{\Omega} \L \theta_j(x) \L v(x)\,dx=\sum_{j=1}^N \Theta^{-1}_{i,j} \int_{\Omega} \psi_j(y) v(y)\,dy=0,
\end{equation}
Which implies \eqref{eqn:orthogonality}, \eqref{eqn:orthogonalidoddoty} and \eqref{eqn:orthogjuoddoty}.

Let $w\in \R^N$ and  $\phi_w:=\sum_{i=1}^N w_i \phi_i$. Let $v\in V$ such that $\int_{\Omega}v(x)\psi_i(x)\,dx=w_i$ for all $i\in \{1,\ldots,N\}$. Since $\phi_w-v \in V_0$, it follows that
\begin{equation}
 \<v,v\> = \<\phi_w,\phi_w\> +\<v-\phi_w,v-\phi_w\>.
\end{equation}
It follows that $\sum_{i=1}^N w_i \phi_i$ is the unique minimizer of $\<v,v\>$
over all $v\in V$ such that $\int_{\Omega}v(x)\psi_i(x)\,dx=w_i$. Note that this also implies that
$\phi_i$ is the minimizer of \eqref{eqn:convexopt}.
\end{proof}
\subsection{Non-white Gaussian noise}
If $\xi$ is not white noise (i.e. $\Lambda(x,y)\not=\delta(x-y)$) then Theorem \ref{thm:rkhs},Theorem \ref{lem:minimizingproperty} and Proposition \ref{prop:nonempty} remain true provided that the definitions of the space $V$ and scalar product $\<\cdot,\cdot\>$ are changed to
\begin{equation}
\begin{split}
V:=\big\{\phi \in \H(\Omega) \big| \L_{\Lambda}\L \phi\in L^2(\Omega),\, \B\phi=0\text{ and }\B_{\Lambda}\L\phi=0  \text{ on }\partial \Omega\big\},
\end{split}
\end{equation}
\begin{equation}
 \<u, v\> := \int_\Omega \big(\L_{\Lambda}\L u(x)\big)\big(\L_{\Lambda}\L v(x)\big)\,dx,
\end{equation}
where  $\L_{\Lambda}$ and $\B_{\Lambda}$ are defined in \eqref{eqn:noisyxi}.

\section{Accuracy of the basis elements $\phi_i$}

\subsection{Pointwise estimates}
Let $\|v\|_V$ be defined as in \eqref{eq:normV}.

\begin{Theorem}\label{thm:gammaproof}
Assume that $\Gamma(x,x)<\infty$. Let $v\in V$. It holds true that for $x\in \Omega$
\begin{equation}
\Big|v(x)-\sum_{i=1}^N  \phi_i(x) \big(\int_{\Omega}v(y)\psi_i(y)\,dy \big) \Big| \leq \sigma(x) \|v\|_V,
\end{equation}
where $\sigma^2(x)$ is the variance of  $u(x)$ (solution of \eqref{eqn:noisy})   conditioned on \\  $\int_{\Omega}u(y)\psi_1(y)\,dy,\ldots,\int_{\Omega}u(y)\psi_N(y)\,dy$  as defined by \eqref{eq:ggjhgs723}.
In particular if $u$ is the solution of the original integro-differential equation \eqref{eqn:scalargeneral}, then
\begin{equation}
\Big|u(x)-\sum_{i=1}^N \phi_i(x) \big(\int_{\Omega}u(y)\psi_i(y)\,dy \big)\Big| \leq \sigma(x) \|g\|_{L^2(\Omega)},
\end{equation}
 if $\phi_i,\sigma$ are derived from white noise, and
\begin{equation}
\Big|u(x)-\sum_{i=1}^N \phi_i(x) \big( \int_{\Omega}u(y)\psi_i(y)\,dy\big)\Big| \leq \sigma(x) \|\L_\Lambda g\|_{L^2(\Omega)},
\end{equation}
 if $\phi_i,\sigma$ are derived from the noise with covariance function $\Lambda$ described in \eqref{eqn:noisyxi}.
\end{Theorem}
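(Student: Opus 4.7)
The plan is to leverage the RKHS structure on $V$ provided by Theorem \ref{thm:rkhs}. Introduce the \emph{error representer} $\eta_x(\cdot) := \Gamma(\cdot,x) - \sum_{i=1}^N \phi_i(x)\,\theta_i(\cdot)$, with $\theta_i$ as in \eqref{eq:defthetai}. The hypothesis $\Gamma(x,x) < \infty$ together with the fact that $\|\L \theta_i\|_{L^2}^2 = \Theta_{i,i}$ (established in the proof of Proposition \ref{prop:nonempty}) ensures $\eta_x \in V$. Two inner-product identities then emerge: $\<v,\Gamma(\cdot,x)\>=v(x)$ directly from Theorem \ref{thm:rkhs}, and $\<v,\theta_i\>=\int_\Omega v(y)\psi_i(y)\,dy$, which follows because \eqref{eq:ugwygwugye} gives $\L\theta_i(y)=\int_\Omega G(z,y)\psi_i(z)\,dz$, so that $\<v,\theta_i\> = \int_\Omega(\L v)(\L\theta_i)\,dx$ collapses via the adjoint Green's function (Remark \ref{rmllstar}) to $\int_\Omega v(z)\psi_i(z)\,dz$. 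Combining the two produces the key identity
\[
v(x) - \sum_{i=1}^N \phi_i(x)\int_\Omega v(y)\psi_i(y)\,dy \;=\; \<v,\eta_x\>.
\]

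Cauchy--Schwarz in $V$ then bounds this by $\|v\|_V\,\|\eta_x\|_V$, so everything reduces to computing $\|\eta_x\|_V^2 = \<\eta_x,\eta_x\>$. Expanding and applying the two identities above termwise gives $\<\Gamma(\cdot,x),\Gamma(\cdot,x)\>=\Gamma(x,x)$, $\<\Gamma(\cdot,x),\theta_i\>=\theta_i(x)$, and $\<\theta_i,\theta_j\>=\int_\Omega\psi_i\,\theta_j\,dy = \Theta_{i,j}$. Substituting $\phi_i(x)=\sum_j\Theta^{-1}_{i,j}\theta_j(x)$ and exploiting the symmetry of $\Theta^{-1}$, both the cross term $2\sum_i\phi_i(x)\theta_i(x)$ and the quadratic term $\sum_{i,j}\phi_i(x)\phi_j(x)\Theta_{i,j}$ reduce to the same bilinear expression $\sum_{i,j}\Theta^{-1}_{i,j}\theta_i(x)\theta_j(x)$ (up to the factor of $2$ in the cross term), so they combine into
\[
\|\eta_x\|_V^2 \;=\; \Gamma(x,x) - \sum_{i,j=1}^N \Theta^{-1}_{i,j}\,\theta_i(x)\theta_j(x) \;=\; \sigma(x)^2,
\]
which matches \eqref{eq:ggjhgs723}.

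This establishes the first inequality. To specialize to the deterministic solution $u$ of \eqref{eqn:scalargeneral} it suffices to identify $\|u\|_V$: in the white-noise setting, $\|u\|_V^2 = \int_\Omega(\L u)^2\,dx = \|g\|_{L^2(\Omega)}^2$ by the definition of $\|\cdot\|_V$ in Section \ref{subsec:whitenoise}, yielding the second inequality; in the non-white case the product on $V$ becomes $\<u,v\> = \int_\Omega(\L_\Lambda\L u)(\L_\Lambda\L v)\,dx$, so $\|u\|_V = \|\L_\Lambda g\|_{L^2(\Omega)}$, giving the third. The only delicate points are verifying $\<v,\theta_i\>=\int v\psi_i$ (which packages a Green's-function duality argument via Remark \ref{rmllstar}) and the membership $\eta_x\in V$; once these are in hand, the rest is elementary RKHS algebra and Cauchy--Schwarz.
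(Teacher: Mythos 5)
Your proposal is correct and follows essentially the same route as the paper: the reproducing kernel property of Theorem \ref{thm:rkhs} to write the error as $\<v,\Gamma(\cdot,x)-\sum_i\phi_i(x)\theta_i\>$, Cauchy--Schwarz in $V$, and then expansion of the squared $V$-norm of the error representer to recover $\sigma(x)^2$ from \eqref{eq:ggjhgs723}. The only difference is that you spell out the membership $\eta_x\in V$, the identity $\<v,\theta_i\>=\int_\Omega v\psi_i$, and the specializations $\|u\|_V=\|g\|_{L^2(\Omega)}$ (respectively $\|\L_\Lambda g\|_{L^2(\Omega)}$), all of which the paper leaves implicit.
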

\begin{proof}
Let $v\in V$ and $x\in \Omega$.  Using the reproducing kernel property of Theorem \ref{thm:rkhs} we obtain that
\begin{equation}
\big|v(x)-\sum_{i=1}^N \phi_i(x) \int_{\Omega}v(y)\psi_i(y)\,dy\big| =\Big|\<v,\Gamma(\cdot,x)-\sum_{i=1}^N \phi_i(x)
\int_{\Omega}\Gamma(\cdot,y)\psi_i(y)\,dy\>\Big|.
\end{equation}
Therefore, using Cauchy-Schwartz inequality
\begin{equation}\label{eq:iuiue}
\big|v(x)-\sum_{i=1}^N \phi_i(x) \int_{\Omega}v(y)\psi_i(y)\,dy\big| \leq \|v\|_V \big\|\Gamma(\cdot,x)-\sum_{i=1}^N \phi_i(x)
\int_{\Omega}\Gamma(\cdot,y)\psi_i(y)\,dy\big\|_V.
\end{equation}
We conclude by expanding the right hand side of \eqref{eq:iuiue} and the definition $\phi_i(x)=\sum_{j=1}^N \Theta^{-1}_{i,j} \int_{\Omega}\Gamma(x,y)\psi_i(y)\,dy$.
\end{proof}

\begin{Remark}
$\sigma^2(x)$ is also known as the Power function in radial basis function interpolation \cite{Wendland:2005, Fasshauer:2005}.
The proof of Theorem \ref{thm:gammaproof} is similar to the one used to derive local error estimates for radial basis function interpolation
of scattered data (see  \cite{WuSchback:93} in which  $\sigma^2(x)$ was referred to as the Kriging function, a terminology coming from geostatistics \cite{Myers:1992}).
\end{Remark}

\subsection{$\H(\Omega)$-norm estimates}

Let  $V_0$ be the subset of $V$ defined by \eqref{eq:hiuhiu33}. Write
\begin{equation}
\rho(V_0):=\sup_{v\in V_0}\frac{\|v\|_{\H(\Omega)}}{\|v\|_V},
\end{equation}
where $\|.\|_{\H(\Omega)}$ is the natural norm associated with the space on which the operator $\L$ is defined.

\begin{Theorem}\label{thm:acchtov}
We have for all $v\in V$
\begin{equation}\label{eq:kshuh}
\Big\|v-\sum_{i=1}^N \phi_i \big(\int_{\Omega}v(y)\psi_i(y)\,dy\big)\Big\|_{\H(\Omega)}\leq  \rho(V_0) \|v\|_V,
\end{equation}
and $\rho(V_0)$ is the smallest constant for which \eqref{eq:kshuh} holds for all $v\in V$.
\end{Theorem}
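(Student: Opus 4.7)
The plan is to reduce the $\H(\Omega)$-norm error of the interpolant to a ratio over the constraint space $V_0$, using the orthogonal decomposition supplied by Theorem~\ref{lem:minimizingproperty}.

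First, set $w := v - \sum_{i=1}^N \phi_i\bigl(\int_\Omega v(y)\psi_i(y)\,dy\bigr)$. Since $\int_\Omega \phi_i \psi_j = \delta_{i,j}$ (established in Proposition~\ref{prop:nonempty}), a direct computation yields $\int_\Omega w(x)\psi_j(x)\,dx = 0$ for every $j\in\{1,\ldots,N\}$. Hence $w \in V_0$, and by the very definition of $\rho(V_0)$,
\begin{equation*}
\|w\|_{\H(\Omega)} \leq \rho(V_0)\,\|w\|_V.
\end{equation*}

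Next, I would use the variational/orthogonality property of Theorem~\ref{lem:minimizingproperty} to bound $\|w\|_V$ by $\|v\|_V$. Writing $\phi_v := \sum_{i=1}^N \phi_i \int_\Omega v\psi_i$ and noting $v = \phi_v + w$ with $w \in V_0$, \eqref{eqn:orthogonality} gives $\<\phi_v, w\> = 0$. Pythagoras then yields
\begin{equation*}
\|v\|_V^2 = \|\phi_v\|_V^2 + \|w\|_V^2 \geq \|w\|_V^2,
\end{equation*}
so $\|w\|_V \leq \|v\|_V$. Chaining with the previous inequality delivers \eqref{eq:kshuh}.

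Finally, I would verify optimality of the constant. For any $v \in V_0$, the interpolation sum vanishes identically (each $\int v\psi_i = 0$), so the left-hand side of \eqref{eq:kshuh} is exactly $\|v\|_{\H(\Omega)}$. Thus if \eqref{eq:kshuh} held with some constant $C$ in place of $\rho(V_0)$, we would obtain $\|v\|_{\H(\Omega)} \leq C\|v\|_V$ for every $v \in V_0$, forcing $\rho(V_0) \leq C$ by taking the supremum. This shows $\rho(V_0)$ is the smallest admissible constant.

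There is no real obstacle here: the entire argument is an orthogonal projection onto the constraint-annihilating subspace $V_0$, and both halves (upper bound and sharpness) drop out once one recognizes that $v \mapsto \sum_i \phi_i \int v\psi_i$ is the $\<\cdot,\cdot\>$-orthogonal projection onto the $\phi_i$-span, with $V_0$ as its orthogonal complement inside $V$. The only mild check is that $w$ indeed lies in $V$ (so that $\|w\|_V$ makes sense), which is immediate since $v \in V$ and each $\phi_i \in V$ by Proposition~\ref{prop:nonempty}.
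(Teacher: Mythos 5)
Your proof is correct and follows essentially the same route as the paper: write $v_\Psi=\sum_i\phi_i\int_\Omega v\psi_i$, note $v-v_\Psi\in V_0$ so the definition of $\rho(V_0)$ applies, and use the orthogonality/Pythagorean identity from Theorem~\ref{lem:minimizingproperty} to get $\|v-v_\Psi\|_V\leq\|v\|_V$. Your explicit verification of sharpness (restricting \eqref{eq:kshuh} to $v\in V_0$, where the interpolant vanishes) is a small addition the paper leaves implicit, and it is correct.
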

\begin{proof}
Write $v_\Psi(x):=\sum_{i=1}^N \phi_i(x) \big(\int_{\Omega}v(y)\psi_i(y)\,dy\big)$

Observing that $v-v_\Psi$ belongs to $V_0$ implies that
\begin{equation}
\|v-v_\Psi\|_{\H(\Omega)}\leq  \rho(V_0) \<v-v_\Psi,v-v_\Psi\>^\frac{1}{2}.
\end{equation}
Theorem \ref{lem:minimizingproperty} implies that
\begin{equation}
\<v,v\>=\<v_\Psi,v_\Psi\>+\<v-v_\Psi,v-v_\Psi\>,
\end{equation}
which leads to
\begin{equation}
\<v-v_\Psi,v-v_\Psi\> =\<v,v\>-\<v_\Psi,v_\Psi\>\leq  \<v,v\>,
\end{equation}
which concludes the proof.
\end{proof}

\begin{Remark}\label{rmk:gammaproof}
Observe that Theorem \ref{thm:acchtov} implies that if  $u$ is the solution of the original integro-differential equation \eqref{eqn:scalargeneral} and $\phi_i,\sigma$ are derived from white noise, then
\begin{equation}
\Big\|u-\sum_{i=1}^N \phi_i \big(\int_{\Omega}u(y)\psi_i(y)\,dy \big)\Big\|_{\H(\Omega)} \leq \rho(V_0) \|g\|_{L^2(\Omega)}.
\end{equation}
Similarly,
 if $\phi_i,\sigma$ are derived from  the noise with covariance function $\Lambda$ described in \eqref{eqn:noisyxi}, then
\begin{equation}
\Big\|u-\sum_{i=1}^N \phi_i \big(\int_{\Omega}u(y)\psi_i(y)\,dy \big)\Big\|_{\H(\Omega)} \leq \rho(V_0) \|\L_\Lambda g\|_{L^2(\Omega)}.
\end{equation}
\end{Remark}

\begin{Example}\label{eg:firstone}
If $\L$ and $\B$ correspond to the prototypical example \eqref{eqn:scalar} (Example \ref{eg:8gy2ys}), if $\xi$ is white noise, and if the observable functions are masses of Diracs at points $x_i\in \Omega$ (and $d\leq 3$), then  \cite{OwhadiZhangBerlyand:2014},
\begin{equation}\label{eq:djkjjk3d}
\rho(V_0)\leq C H,
\end{equation}
where $C$ depends only on $\lambda_{\min}(a), \lambda_{\max}(a)$  and
 where $\lambda_{\max}(a):=\sup_{x\in \Omega, l\not=0}l^T a(x) l/|l|^2 $, $\lambda_{\min}(a):=\inf_{x\in \Omega, l\not=0}l^T a(x) l/|l|^2 $ and
 $H$ is the mesh-norm
\begin{equation}\label{eq:meshnorm}
H:=\sup_{x\in \Omega}\min_{i}\|x-x_i\|,
\end{equation}
and
\begin{equation}
\big\|u-\sum_{i=1}^N \phi_i(x)u(x_i)\big\|_{\H^1_0(\Omega)}\leq  C H \big\|\diiv(a\nabla u)\big\|_{L^2(\Omega)}.
\end{equation}
Let us also recall that the proof of \eqref{eq:djkjjk3d} is based on the following Poincar\'{e} inequality (Lemma 3.1 of \cite{OwhadiZhangBerlyand:2014})

\begin{Lemma}\label{lem:murat}{(\cite[Lemma~3.1 ]{OwhadiZhangBerlyand:2014})}
Let $d\leq 3$ and $B_1$ be the open ball of center $0$ and radius $1$. There exists a finite strictly positive constant $C_{\lambda_{\min}(a),\lambda_{\max}(a)}$ such that for all $v\in \H^1(B_1)$
 such that $\diiv(a\nabla v)\in L^2(B_1)$ it holds true that
\begin{equation}\label{eqn:PoincareNarcowichmurat}
\|v-v(0)\|_{L^2(B_1)}^2 \leq C_{\lambda_{\min}(a),\lambda_{\max}(a)}  \Big(\|\nabla v\|_{L^2(B_1)}^2+\big\|\diiv (a\nabla v)\big\|_{L^2(B_1)}^2\Big).
\end{equation}
\end{Lemma}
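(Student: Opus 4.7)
The plan is a standard compactness/contradiction argument whose one nontrivial ingredient is the De Giorgi--Nash--Moser interior H\"older estimate for $-\diiv(a\nabla\cdot)$ with $L^\infty$ coefficients and $L^q$ right hand side; the restriction $d\leq 3$ enters precisely so that the natural $L^2$ bound on the right hand side sits strictly above the scaling threshold $q>d/2$.

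Suppose the inequality fails. Then for every $n$ there is $v_n\in H^1(B_1)$ with $\diiv(a\nabla v_n)\in L^2(B_1)$ satisfying $\|v_n-v_n(0)\|_{L^2(B_1)}=1$ while $\|\nabla v_n\|_{L^2(B_1)}^2+\|\diiv(a\nabla v_n)\|_{L^2(B_1)}^2\to 0$. Once pointwise evaluation at $0$ is justified by De Giorgi--Nash--Moser (see below), both sides are invariant under $v_n\mapsto v_n-v_n(0)$, so I normalize $v_n(0)=0$ and $\|v_n\|_{L^2(B_1)}=1$. The sequence is bounded in $H^1(B_1)$, so by Rellich I extract a subsequence converging strongly in $L^2(B_1)$ and weakly in $H^1(B_1)$ to some $v$. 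Weak lower semicontinuity of $\|\nabla\cdot\|_{L^2}$ combined with $\nabla v_n\to 0$ forces $\nabla v=0$, so $v$ is a constant $c\in \R$ with $|c|^2|B_1|=1$, and in particular $c\neq 0$.

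Now I invoke De Giorgi--Nash--Moser: since $d\leq 3$ gives $2>d/2$, the uniform bounds on $\|v_n\|_{L^2(B_1)}$ and $\|\diiv(a\nabla v_n)\|_{L^2(B_1)}$ yield a uniform interior H\"older estimate
\begin{equation*}
\|v_n\|_{C^\alpha(\overline{B_{1/2}})}\leq C_{\lambda_{\min}(a),\lambda_{\max}(a)}\bigl(\|v_n\|_{L^2(B_1)}+\|\diiv(a\nabla v_n)\|_{L^2(B_1)}\bigr),
\end{equation*}
with $\alpha>0$ depending only on $d,\lambda_{\min}(a),\lambda_{\max}(a)$. This also justifies the pointwise value $v_n(0)$ used in the normalization. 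Arzel\`a--Ascoli then upgrades the $L^2$ convergence to uniform convergence on $\overline{B_{1/2}}$, so $v_n(0)\to c$. But $v_n(0)=0$, forcing $c=0$ and contradicting $|c|^2|B_1|=1$.

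The only real obstacle is the H\"older estimate; everything else is standard Sobolev compactness. It is precisely this step that pins the hypothesis $d\leq 3$: in dimension four the critical exponent $d/2=2$ matches the integrability of the forcing, and $L^2$ data no longer yields a pointwise well-defined, H\"older continuous representative through a constant depending only on $\lambda_{\min}(a),\lambda_{\max}(a)$.
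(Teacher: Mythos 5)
Your argument is the same compactness--contradiction proof as the paper's: normalize so that $v_n(0)=0$ and $\|v_n\|_{L^2(B_1)}=1$, use Rellich--Kondrachov to extract a strong $L^2$ limit, which must be a constant of unit $L^2$ norm because the gradients vanish, and then use the interior De Giorgi--Nash--Moser/Stampacchia H\"older estimate (available with $L^2$ right-hand side precisely because $d\leq 3$ gives $2>d/2$) together with Arzel\`a--Ascoli to pass the normalization $v_n(0)=0$ to the limit and reach a contradiction. This is step for step the paper's proof, including the justification of pointwise evaluation at $0$.

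The one point you miss is the uniformity of the constant. As you set up the contradiction, the matrix $a$ is held fixed, so the argument only produces \emph{some} finite constant for that particular $a$; nothing in it shows that the constant can be chosen to depend only on $\lambda_{\min}(a)$ and $\lambda_{\max}(a)$, which is what the subscript in $C_{\lambda_{\min}(a),\lambda_{\max}(a)}$ asserts and what the paper needs downstream (the bound $\rho(V_0)\leq CH$ of Example \ref{eg:firstone} is obtained by rescaling the lemma to small balls, where the rescaled coefficients change but their ellipticity bounds do not, so uniformity over the whole class is essential). The paper negates the uniform statement instead: it introduces a sequence of matrices $a_n'$ with eigenvalues uniformly bounded by $\lambda_{\min}(a)$ and $\lambda_{\max}(a)$ alongside the functions $v_n$, and notes that the coefficients enter the compactness argument only through the H\"older estimate, whose constants depend only on $d$ and the ellipticity bounds, so no compactness in the coefficients is required. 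Since the H\"older estimate you invoke is already stated with constants depending only on $d,\lambda_{\min}(a),\lambda_{\max}(a)$, your proof is repaired verbatim by replacing the fixed $a$ with such a sequence $a_n'$; as written, however, it proves a weaker statement than the lemma.
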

\begin{proof}
We will recall the proof of this lemma (as presented in \cite[Lemma~3.1 ]{OwhadiZhangBerlyand:2014}) for the sake of completeness. The proof is per absurdum. Note that since $d\leq 3$ the assumptions $v\in \H^1(B_1)$ and $\diiv(a\nabla v)\in L^2(B_1)$ imply the H\"{o}lder continuity of $v$ in $B_1$.
Assume that \eqref{eqn:PoincareNarcowichmurat} does not hold. Then there exists a sequence $v_n$ and a sequence $a_n'$ whose maximum and minimum eigenvalues are uniformly bounded by $\lambda_{\min}(a)$ and $\lambda_{\max}(a)$ (we need to introduce that sequence because we want the constant in \eqref{eqn:PoincareNarcowichmurat} to depend only $d,\lambda_{\min}(a),\lambda_{\max}(a)$)
such that
\begin{equation}\label{eqn:PoincareNarcowichmurat1}
\|v_n-v_n(0)\|_{L^2(B_1)}^2 > n  \Big(\|\nabla v_n\|_{L^2(B_1)}^2+\big\|\diiv (a_n'\nabla v_n)\big\|_{L^2(B_1)}^2\Big)
\end{equation}
Letting $w_n=\frac{v_n-v_n(0)}{\|v_n-v_n(0)\|_{L^2(B_1)}}$ we obtain that $w_n(0)=0$, $\|w_n\|_{L^2(B_1)}=1$ and
\begin{equation}\label{eqn:PoincareNarcowichmurat2}
\|\nabla w_n\|_{L^2(B_1)}^2+\big\|\diiv (a_n'\nabla w_n)\big\|_{L^2(B_1)}^2 < \frac{1}{n}
\end{equation}
Since
\begin{equation}\label{eqn:PoincareNarcowichmurat3}
\| w_n\|_{\H^1(B_1)}< 1+\frac{1}{n}\leq 2
\end{equation}
it follows that there exists a subsequence $w_{n_j}$ and a $w\in \H^1(B_1)$ such that $w_{n_j}\rightharpoonup w$ weakly in $\H^1(B_1)$ and
$\nabla w_{n_j}\rightharpoonup \nabla w$ weakly in $L^2(B_1)$.  Using $\|\nabla w_n\|_{L^2(B_1)}\leq 1/n$ we deduce that $\nabla w=0$ which implies that $w$ is a constant in $B_1$. Since by the Rellich–-Kondrachov theorem the embedding $\H^1(B_1)\subset L^2(B_1)$ is compact it follows from \eqref{eqn:PoincareNarcowichmurat3} that $w_{n_j}\rightarrow w$ strongly in $L^2(B_1)$ which (using $\|w_n\|_{L^2(B_1)}=1$) implies that $\|w\|_{L^2(B_1)}=1$.
Now \eqref{eqn:PoincareNarcowichmurat3}  together with the fact that $\big\|\diiv (a_n'\nabla w_n)\big\|_{L^2(B_1)}^2$ is uniformly bounded and that $d\leq 3$ implies that $w_n$ is uniformly H\"{o}lder continuous on $B(0,\frac{1}{2})$ (see for instance \cite{Stampaccia:1964}). This implies that $w$ is continuous in $B(0,\frac{1}{2})$ and that $w(0)=0$. This contradicts the fact that $w$ is a constant in $B_1$ with $\|w\|_{L^2(B_1)}=1$.
\end{proof}

\end{Example}

\begin{Example}
If $\L$ and $\B$ correspond to the prototypical example \eqref{eqn:scalar} (Example \ref{eg:8gy2ys}), if $\xi$ is white noise, and if the observable functions are indicator functions of Vorono\"{i} cells around points in  $x_i\in \Omega$ or of tetrahedra of a regular tessellation of the points $x_i \in \Omega$ then
\eqref{eq:djkjjk3d} remains valid as a simple consequence of localized Poincar\'{e} inequalities. Indeed for $v\in V_0$, writing $C_i$ the Vorono\"{i} cells at
the points  $x_i\in \Omega$, we have (assuming $\Omega$ is the union of those Vorono\"{i} cells)

\begin{equation}
\begin{split}
\|v\big\|_{\H^1_0(\Omega)}^2 = \int_{\Omega} v(x)\big(-\diiv(a(x)\nabla v(x))\big)\,dx\leq \|v\|_{L^2(\Omega)} \big\|\diiv(a\nabla v)\big\|_{L^2(\Omega)},
\end{split}
\end{equation}
and we conclude by applying Poincar\'{e}'s inequality to the $L^2$-norm of $v$ within each cell $C_i$, i.e.
\begin{equation}
\begin{split}
 \|v\|_{L^2(\Omega)}^2 =\sum_{i}  \|v\|_{L^2(C_i)}^2\leq C H^2  \sum_{i}  \|\nabla v\|_{L^2(C_i)}^2=  C H^2 \|\nabla v\|_{L^2(\Omega)}^2.
\end{split}
\end{equation}
\end{Example}

We will give the last example as a theorem.
\begin{Theorem}
Let $\L$ and $\B$ be as in the prototypical example \eqref{eqn:scalar} (Example \ref{eg:8gy2ys}) and let $\xi$ be white noise. Let $\psi_1,\ldots,\psi_N$ be linearly independent generalized probability densities on $\Omega$ with (possibly overlapping) support $\operatorname{support}(\psi_i)$. Define
\begin{equation}
H:=\sup_{x\in \Omega}\min_i \sup_{y\in \operatorname{support}(\psi_i)}\|x-y\|.
\end{equation}
Then, it holds true that
\begin{equation}\label{eq:djkjjk3dsee}
\rho(V_0)\leq C H,
\end{equation}
where $C$ depends only on $\lambda_{\min}(a)$ and  $\lambda_{\max}(a)$. Henceforth, for $u \in V$
\begin{equation}
\big\|u-\sum_{i=1}^N \phi_i(x)\int_{\Omega}u(y)\psi_i(y)\,dy\big\|_{\H^1_0(\Omega)}\leq  C H \big\|\diiv(a\nabla u)\big\|_{L^2(\Omega)}.
\end{equation}
\end{Theorem}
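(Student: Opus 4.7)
The plan is to mimic Example \ref{eg:firstone} and argue in three stages: reduce to a Poincar\'e-type $L^2$ inequality via the energy identity, prove that inequality locally by producing a zero of $v$ from the condition $\int v \psi_i = 0$, and then globalize by a bounded-overlap covering. Throughout I would assume $d\leq 3$, which is the regime in which Lemma \ref{lem:murat} applies and in which every element of $V$ is continuous (by Stampacchia-type regularity, see \cite{Stampaccia:1964}). Once $\rho(V_0)\leq CH$ is established, the second (error) inequality in the theorem follows immediately from Remark \ref{rmk:gammaproof}.

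First I would dispose of the ellipticity: for any $v\in V_0\subset \H^1_0(\Omega)$,
\[
\lambda_{\min}(a)\|\nabla v\|_{L^2(\Omega)}^2 \leq \int_\Omega a\nabla v\cdot\nabla v\,dx = -\int_\Omega v\,\diiv(a\nabla v)\,dx \leq \|v\|_{L^2(\Omega)}\|v\|_V,
\]
so it would suffice to prove the Poincar\'e-type bound
\[
\|v\|_{L^2(\Omega)}^2 \leq C\big(H^2\|\nabla v\|_{L^2(\Omega)}^2 + H^4\|v\|_V^2\big) \quad \text{for every } v\in V_0,
\]
with $C$ depending only on $\lambda_{\min}(a)$ and $\lambda_{\max}(a)$. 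Combined with the energy estimate and Young's inequality, this bound would absorb the $\|\nabla v\|^2$ term and yield $\|v\|_{L^2}\leq C' H^2 \|v\|_V$, which in turn would produce $\|\nabla v\|_{L^2}\leq C'' H\|v\|_V$, i.e.\ $\rho(V_0)\leq C'' H$.

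For the Poincar\'e bound, I would work pointwise in $x$. For each $x\in\Omega$ pick $i(x)$ attaining the minimum in the definition of $H$, so $\operatorname{support}(\psi_{i(x)})\subset \overline{B}(x,H)$. Since $\psi_{i(x)}$ is a (nonnegative) probability measure and $\int v\,\psi_{i(x)}=0$, the continuous function $v$ either vanishes identically on $\operatorname{support}(\psi_{i(x)})$ or takes both positive and negative values there. Joining two sign-opposite points by a straight segment inside the convex hull of $\operatorname{support}(\psi_{i(x)})\subset \overline{B}(x,H)$ and invoking the intermediate value theorem produces a point $x^*=x^*(x)\in \overline{B}(x,H)$ with $v(x^*)=0$; in particular $x\in \overline{B}(x^*,H)$. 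The rescaled form of Lemma \ref{lem:murat}, applied to the ball $B(x^*,H)$, then yields
\[
\|v\|_{L^2(B(x^*,H))}^2 = \|v-v(x^*)\|_{L^2(B(x^*,H))}^2 \leq C\big(H^2\|\nabla v\|_{L^2(B(x^*,H))}^2 + H^4\|\diiv(a\nabla v)\|_{L^2(B(x^*,H))}^2\big).
\]
To globalize, I would extract from the open cover $\{B(x^*(x),H)\}_{x\in\Omega}$ of $\Omega$ a Besicovitch subcover $\{B_k\}_k$ with dimension-only multiplicity $M=M(d)$, then sum the local estimate over $k$ using $\sum_k \mathbf{1}_{B_k}\leq M$ to obtain $\|v\|_{L^2(\Omega)}^2\leq C M \big(H^2\|\nabla v\|_{L^2(\Omega)}^2 + H^4\|v\|_V^2\big)$, which is exactly the Poincar\'e bound demanded by the first step.

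The hard part I expect is the selection of the vanishing point $x^*$ when $\operatorname{support}(\psi_{i(x)})$ is disconnected or when the associated ball clips $\partial\Omega$: the clean remedy is to extend $v$ by zero outside $\Omega$ (legitimate because $v$ has vanishing Dirichlet trace, so the extension remains continuous), which forces a zero of $v$ along any segment that crosses $\partial\Omega$ and also allows Lemma \ref{lem:murat} to be invoked on balls overlapping the boundary after a mild reflection argument. A secondary annoyance is the implicit dimension dependence entering both through the Besicovitch multiplicity $M(d)$ and through Lemma \ref{lem:murat} itself, both of which force the standing hypothesis $d\leq 3$ already present in the preceding examples.
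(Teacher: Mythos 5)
Your core reduction is exactly the paper's proof: since each $\psi_i$ is a nonnegative (generalized) probability density and, for $d\leq 3$, every $v\in V$ is continuous, the constraint $\int_\Omega v\psi_i=0$ forces a zero $y_i$ of $v$ within distance $H$ of any given $x\in\Omega$, so the points $y_1,\ldots,y_N$ have mesh norm at most $H$ and one is back in the Dirac-measurement situation; the paper then simply invokes the already-established estimate \eqref{eq:djkjjk3d} of Example \ref{eg:firstone} (i.e.\ the result of \cite{OwhadiZhangBerlyand:2014}) to conclude \eqref{eq:djkjjk3dsee}, with the second display following from Theorem \ref{thm:acchtov} / Remark \ref{rmk:gammaproof} as you say. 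Where you differ is that you re-derive that cited estimate from Lemma \ref{lem:murat} via the energy identity, a rescaled local Poincar\'e bound on balls $B(x^*,H)$, and a bounded-overlap covering; this is the right skeleton (and indeed the skeleton of the proof in \cite{OwhadiZhangBerlyand:2014}), but two of your patches need repair if you insist on redoing it rather than citing it. First, Besicovitch's covering theorem applies to families of balls centered at the points being covered, whereas your balls $B(x^*(x),H)$ are not centered at $x$; you should instead cover $\Omega$ by balls $B(x_k,H)$ with $H$-separated centers $x_k\in\Omega$ (bounded overlap by a volume count), and enlarge each to $B(x^*(x_k),2H)$, which still has dimension-only multiplicity. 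Second, the extension of $v$ by zero outside $\Omega$ does \emph{not} satisfy $\diiv(a\nabla \tilde v)\in L^2$ across $\partial\Omega$ (a conormal-derivative surface distribution appears), so Lemma \ref{lem:murat} cannot be invoked on boundary-clipping balls via that extension, and a ``reflection argument'' for rough $a$ and merely Lipschitz boundary is not routine; the standard fix is to treat balls meeting $\partial\Omega$ with the Dirichlet condition $v=0$ on $\partial\Omega$ and a Poincar\'e inequality for functions vanishing on part of the boundary, which needs no operator term at all. With those two steps repaired (or, more economically, replaced by the citation of Example \ref{eg:firstone}, as the paper does), your argument is complete.
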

\begin{Remark}
Observe that if  
 for all $i$ the support of $\psi_i$ is contained in a ball of center $x_i$ and radius $H'$, then
\begin{equation}
H\leq H'+\sup_{x\in \Omega}\min_{i}\|x-x_i\|,
\end{equation}
in particular if the points $x_i$ have mesh norm $H''$ (see \eqref{eq:meshnorm}) then $H\leq H'+H''$.
\end{Remark}
\begin{proof}
The proof of \eqref{eq:djkjjk3dsee} is simply based on the observation that if $v\in V_0$ then (since $\int_{\Omega}v(x)\psi_i(x)\,dx=0$) there exists $N$ points $y_1,\ldots,y_N$ such that $v(y_i)=0$ and the mesh norm of those points is bounded by $H$. Therefore we can apply the result of Example \ref{eg:firstone}.
\end{proof}

\section{Pseudo-algorithm}
A simple pseudo-algorithmic description of the proposed framework for the numerical homogenization of \eqref{eqn:scalargeneral} is as follows: 
\begin{enumerate}
\item Select $N$ linearly independent (measurement) functions $\psi_1,\ldots,\psi_N$ in $L^2(\Omega)$.
 \item Let $\xi$ in \eqref{eqn:noisy} be a Gaussian field of mean $0$ and covariance function $\Lambda(x,y)$ (assumed to be non-degenerate, i.e. such that there exists an inverse covariance function $\Lambda^{-1}(x,y)$ with $\int_{\Omega^2} \Lambda(x,y) \Lambda^{-1}(y,z)\,dy=\delta(x-z)$).
\item  The basis functions $\phi_1,\ldots,\phi_N$ for the numerical homogenization of \eqref{eqn:scalargeneral} are identified as (writing $u$ the solution of \eqref{eqn:noisy} and $\delta_{i,j}=1$ if $i=j$ and $\delta_{i,j}=0$ if $i\not=j$) the deterministic functions
\begin{equation}\label{eqgjshjshg}
\phi_i(x)=\E\big[u(x)\big|\int_{\Omega}u(x)\psi_j(x)\,dx=\delta_{i,j}\text{ for }j=1,\ldots,N\big].
\end{equation}
\item Each $\phi_i$ can also be identified as the unique minimizer of 
\begin{equation}\label{eqn:convexoptsimplsse}
\begin{cases}
\text{Minimize }  \int_{\Omega^2}(\L u(x)) \Lambda^{-1}(x,y) (\L u(y))\,dx\,dy  \\
\text{Subject to } \phi\in \H(\Omega)\text{ and }\int_{\Omega}\phi(x)\psi_j(x)\,dx=\delta_{i,j}\text{ for }j=1,\ldots,N
\end{cases}
\end{equation}
\item Under appropriate choice of the measurement functions $\psi_i$ and the covariance function $\Lambda(x,y)$, the basis functions $\phi_i$ can be computed by localizing the optimization problems \eqref{eqn:convexoptsimplsse} to subdomains of $\Omega$.
\end{enumerate}

\section{Statistical Decision Theory and Practical Applications}
Another motivation for exploring Bayesian approximations of the solution space, lies in the decision theory/game theory approach to numerical homogenization.
In this approach one looks at the numerical homogenization problem \eqref{eqn:scalar} as a repeated game where player B chooses a function $\theta$ of the linear measurements (data) $\int_{\Omega}u(x)\psi_1(x)\,dx,$ $\ldots,$ $\int_{\Omega}u(x)\psi_N(x)\,dx$ and player A chooses a source term $g$ in the unit ball of $L^2(\Omega)$. These two choices combine and form an error term
\begin{equation}\label{eq:loss}
\mathcal{E}(\theta,g)=\Big\|u-\theta\big(\int_{\Omega}u(x)\psi_1(x)\,dx,\ldots,\int_{\Omega}u(x)\psi_N(x)\,dx\big)\Big\|_{L^2(\Omega)}.
\end{equation}
Player's B objective is to minimize the error \eqref{eq:loss} while player's A objective is to maximize it. A surprising result stemming from a generalization \cite{OwhadiMultigrid:2015}  of Wald's Decision Theory \cite{Wald:1945} and Von Neumann's Game Theory \cite{VonNeumann:1944} is that, although such games are deterministic, under weak regularity conditions, the optimal strategy for player $A$ is to play at random by placing an optimal probability distribution $\pi_A$  on the set of candidates for $g$ and, similarly, the best strategy for player $B$ is to assume that player A is playing at random  and to use a function $\theta$ living in the Bayesian class (obtained by placing a prior $\pi_B$  on the set of candidates for $g$ and conditioning with respect to the measurements $\int_{\Omega}u(x)\psi_i(x)\,dx$).

Although the estimator employed by player B may be called Bayesian, the game described here is not (i.e. the choice of player A might be distinct from that of player B) and player B must solve a min max optimization problem over $\pi_A$ and $\pi_B$ to identify an optimal prior distribution for the Bayesian estimator (a careful choice of the prior also appears to be important due to the possible high sensitivity of posterior distributions  \cite{owhadiBayesiansirev2013, OSS:2013, OwhadiScovel:2013, OwhadiScovelQR:2014}).

We refer to \cite{OwhadiMultigrid:2015} for (1) the complete description of the generalization of the Bayesian framework described here to the 
 decision theory/information game formulation (described above) (2)  practical (including numerical) applications of that generalized framework
to the problems of finding numerical homogenization bases and fast solvers for 
 \eqref{eqn:scalar}. In that generalization, optimal numerical homogenization bases functions are obtained by  selecting the prior distribution of $\xi$ (in \eqref{eqn:scalarstochastic}) to be that of a Gaussian field with mean zero and covariance function the operator \eqref{eqn:scalar} (i.e. such that for $f\in H^1_0(\Omega)$, $\int_{\Omega}f(x)\xi(x)\,dx$ is a Gaussian random variable of mean zero and variance $\int_{\Omega}(\nabla f(x))^T a(x) \nabla f(x)\,dx$). In particular  \cite{OwhadiMultigrid:2015} shows how  the identification of an optimal distribution for $\xi$ (in the Gaussian class) leads to the (automated) discovery  of multigrid and multiresolution solvers for PDEs with rough coefficients.

\paragraph{Acknowledgements.}
The author gratefully acknowledges this work supported by  the Air Force Office of Scientific Research under
 Award Number  FA9550-12-1-0389 (Scientific Computation of Optimal Statistical Estimators) and the U.S. Department of Energy Office of Science, Office of Advanced Scientific Computing Research, through the Exascale Co-Design Center for Materials in Extreme Environments (ExMatEx, LANL Contract No
DE-AC52-06NA25396, Caltech Subcontract Number 273448).
 The author also thanks Dongbin Xiu, Lei Zhang and Guillaume Bal for stimulating discussions and Leonid Berlyand for comments on the manuscript.
The author also thanks two anonymous referees for valuable comments and suggestions.

\bibliographystyle{plain}
\bibliography{RPS}

\end{document}